\newcommand{\PP}{\mathbb P}
\newcommand{\CC}{{\mathbb C}}
\newcommand{\NN}{{\mathbb N}}
\newcommand{\ZZ}{{\mathbb Z}}
\newcommand{\QQ}{{\mathbb Q}}
\newcommand{\RR}{{\mathbb R}}
\newcommand{\FF}{{\mathbb F}}
\newcommand{\p}{\mathfrak p}
\newcommand{\Pic}{\mathrm{Pic}}
\newcommand{\NS}{\mathrm{NS}}
\newcommand{\MWL}{\mathrm{MWL}}
\newcommand{\tD}{2h}
\newcommand{\jD}{h}
\newcommand{\polD}{H}
\newcommand{\fH}{f_H}
\newcommand{\XH}{X_H}
\DeclareMathOperator{\rank}{rank}
\numberwithin{equation}{section}
\begin{document}

\title{Low degree rational curves on quasi-polarized K3 surfaces}

\author{S\l awomir Rams}
\address{Institute of Mathematics, Jagiellonian University, 
ul. {\L}ojasiewicza 6,  30-348 Krak\'ow, Poland}
\email{slawomir.rams@uj.edu.pl}

\author{Matthias Sch\"utt}
\address{Institut f\"ur Algebraische Geometrie, Leibniz Universit\"at
  Hannover, Welfengarten 1, 30167 Hannover, Germany}

    \address{Riemann Center for Geometry and Physics, Leibniz Universit\"at
  Hannover, Appelstrasse 2, 30167 Hannover, Germany}

\email{schuett@math.uni-hannover.de}

\dedicatory{Dedicated to Alex Degtyarev of the occasion on his 60th birthday}

\date{}
\subjclass[2010]
{Primary: {14J28};  Secondary {14J27, 14C20}}
\keywords{K3 surface,  rational curve, polarization, elliptic fibration, hyperbolic lattice,
parabolic lattice}

\begin{abstract}
We prove that there are at most	 $(24-r_0)$ low-degree rational curves on high-degree models of K3 surfaces with at most Du Val singularities, where $r_0$ is the number of exceptional divisors on the minimal resolution. 
We also provide several existence results in the above setting (i.e. for rational curves on quasi-polarized K3 surfaces), 
which imply that for various values of $r_0$ our bound cannot be improved.
\end{abstract}

\maketitle

\newcommand{\XXd}{X_{d}}
\newcommand{\XXf}{X_{4}}
\newcommand{\XXp}{X_{5}}
\newcommand{\mF}{\mathcal F}
\newcommand{\MW}{\mathop{\mathrm{MW}}}
\newcommand{\mL}{\mathcal L}
\newcommand{\mR}{\mathcal R}
\newcommand{\Ruledeight}{S_{11}}
\newcommand{\Ruledfour}{S_{4}}
\newcommand{\DivisorRest}{\mathfrak Rest}
\newcommand{\Pl}{\Pi}
\newcommand{\reg}{\operatorname{reg}}

\newcommand{\IK}{{{\mathrm I}}}
\newcommand{\II}{{\mathop{\rm II}}}
\newcommand{\III}{{\mathop{\rm III}}}
\newcommand{\IV}{{\mathop{\rm IV}}}

\theoremstyle{remark}
\newtheorem{obs}{Observation}[section]
\newtheorem{rem}[obs]{Remark}
\newtheorem{example}[obs]{Example}
\newtheorem{ex}[obs]{Example}
\newtheorem{conv}[obs]{Convention}
\newtheorem{crit}[obs]{Criterion}
\newtheorem{claim}[obs]{Claim}
\newtheorem{Fact}[obs]{Fact}
\newtheorem{Addendum}[obs]{Addendum}

\theoremstyle{definition}
\newtheorem{Definition}[obs]{Definition}
\theoremstyle{plain}
\newtheorem{prop}[obs]{Proposition}
\newtheorem{theo}[obs]{Theorem}
\newtheorem{Theorem}[obs]{Theorem}
\newtheorem{lemm}[obs]{Lemma}
\newtheorem{cor}[obs]{Corollary}
\newtheorem{assumption}[obs]{Assumption}

\newcommand{\ux}{\underline{x}}
\newcommand{\ud}{\underline{d}}
\newcommand{\ue}{\underline{e}}
\newcommand{\mmS}{{\mathcal S}}
\newcommand{\mmP}{{\mathcal P}}
\newcommand{\nlines}{\mbox{\texttt l}(\XXp)}
\newcommand{\ii}{\operatorname{i}}

\newcommand{\nonlinflec}{{\mathcal D}}
\newcommand{\linflec}{{\mathcal L}}
\newcommand{\flec}{{\mathcal F}}

\def\Fn{\operatorname{Fn}}
\def\extended{^\text{\rm ex}}
\def\Fex{\Fn\extended}

\newcommand{\Gfamma}{\Fex_{d}(X,H)}
\newcommand{\SG}{G}


\section{Introduction}
\label{intro}
It is well-known that a complex projective normal cubic surface 
with non-empty singular locus contains at most 21 lines 
(or infinitely many -- see e.g. \cite[Table~1]{sakamaki}), 
i.e.\ the number of lines on such a surface (if finite) 
is strictly less than the  number of lines on a smooth cubic surface.
Similar phenomena can be observed for lines on low-degree complex K3 surfaces 
(the number of lines on a complex K3 quartic, resp. sextic, resp. octic with singularities is always strictly lower than the maximal number for smooth K3 surfaces of the same degree, see \cite{DR-2021}, \cite{DR-2021-6}, \cite{DR-2023}), whereas it does not happen on K3 quartics in characteristic $p=2$
(cf.\ \cite{RS-2}).

In this note  we allow the surfaces in question to have singular points, i.e.\ we
consider  birationally
quasi-polarized  K3 surfaces of a fixed degree (i.e.\ pairs $(X,H)$, such that $H \in \mbox{Pic}(X)$ is big, nef, base-point free and non-hyperelliptic with  $H^2=\tD$, cf.\ \cite[\S ~2.3]{DR-2021} and Criterion \ref{crit}). 
For a non-negative integer $d$ we define
\[
r_d := r_d(X) := \#\{\text{rational curves } C\subset X \text{ with } C.H = d\},
\]
The problem of finding the maximum
of $r_d$ for very ample $H$ of a fixed degree 
 has a long history   (cf.\ \cite{degt}, \cite{DIS}, \cite{RS}, \cite{Segre}).
 In particular,  
for surfaces of small degree (smooth or with singular points), the behaviour of  $r_d$ seems to follow no pattern.
For polarized complex K3 surfaces of  degree  $H^2>4d^2$, where $H$ is very ample,
Miyaoka  \cite{Miyaoka} applied the orbibundle Miyaoka--Yau--Sakai inequality from \cite{Miyaoka-orbi}
to obtain the following  bound: 
\begin{eqnarray}
\label{eq:Miyaoka}
\frac{1}{d} r_1+ \frac{2}{d} r_2+\hdots+ r_d \leq\frac{24 \jD}{	\jD-2d^2}.
\end{eqnarray}
  For $\jD>50d^2$ and all positive $i\leq d$, this implies the inequality
\[
r_i\leq 24. 
\]
Subsequently, for very ample $H$,
a lattice-theoretic approach led to cha\-ra\-cte\-ris\-tic-free bounds on the
integers
\[
S_d := r_1+\hdots+r_d = \#\{\text{rational curves } C\subset X \text{ with } 0 < \deg(C)\leq d\},
\]
and a characterization of the K3 surfaces attaining them for large $H^2$ (see \cite[Thm.~1.1-1.3]{RS-24}).

We now turn to quasi-polarized K3 surfaces $X$ where $H$ is still big and nef such that $|H|$ induces a birational morphism,
but it may contract $(-2)$-curves to Du Val singularities, or $ADE$ singularities 
(namely those smooth rational curves $C$ with $C.H=0$, i.e.\ curves 'of degree zero').
In this setting, the number $r_0$ of  curves contracted equals the rank of the root lattice generated by the components
of the minimal resolution of the ADE-singularities (this resolution is exactly $X$).
There is extensive literature on exceptional curves of such a minimal resolution. One has $r_0 \leq 19$ for $p=0$ and the sharp bound $r_0 \leq 21$ when $0< p \leq 19$
(resp.  $r_0 \leq 20$ when $p > 19$)  -- see \cite[Thm~1.1]{Shimada}. 
Still, hardly anything is known 
 on the impact  of the number $r_0$  of degree-$0$ rational curves on the maximal value of the integer $S_d$ for quasi-polarized K3 surfaces.  
 
 Here we adapt the approach from 
 \cite{RS-24}
  to  obtain  bounds for the numbers $S_d$
  when $|H|$ contracts some curves ($r_0>0$).
More precisely, we show 
the following theorem.

\begin{Theorem}
\label{thm}
Let $d\in\NN$ and let $p \neq 2,3$.
\begin{enumerate}
\item[(i)]
For all $\jD \gg 0$ and for all 
quasi-polarized K3 surfaces $X$ of degree $\tD$ over a
 field $k$ of characteristic $p$,  
one has
\[
S_d\leq (24 - r_0) .
\]
\item[(ii)]
If $\jD\gg 0$ and $S_d>(21-r_0)$, 
then the  rational curves of degree at most $d$ are fibre components
of a genus one fibration.
\item[(iii)]
For $d\geq 3$, let $\jD\geq d^2-1$. Then
for any   $r\in
\{1,\hdots, 17\}$,
there are K3 surfaces of degree $2h$ over $\CC$ such that 
$$
r_d\geq 24-r \;\;\;  \mbox{ and } \;\;\;  r_0 =r.$$
\item[(iv)]
Let $p\equiv 1\mod 4$ prime.
For $d\geq 3$, 
the statement of (iii) holds true
for all $\jD\geq  d((p_0+1)d+1-p_0)/2$ 
where $p_0 = \min\{q \text{ prime}; \; q\nmid 2d\}$.
 \item[(v)]
 For $d\geq 3$ and primes $p\equiv 3\mod 4$, 
 the statement of (iii) holds true under either of the following conditions:
 \begin{itemize}
 \item
 $p\nmid d$ and $h\geq d((p+1)d+1-p)/2$;
 \item
$p>4\sqrt{d+1}$ and  $h\geq d^2-1$.
 \end{itemize}
\end{enumerate}
\end{Theorem}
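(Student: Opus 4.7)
My plan is to mirror the construction giving part (iii) (and its positive-characteristic variant in (iv)), adapting the required arithmetic inputs so that they remain available at primes $p\equiv 3\pmod 4$. The unifying recipe is to produce a K3 surface $X$ over $\overline k$ carrying an elliptic fibration $\pi\colon X\to \PP^1$ whose reducible singular fibres together contribute exactly $24$ rational $(-2)$-curves (as forced by $e(X)=24$), and then to exhibit a class $H\in\Pic(X)$ which is big, nef, and base-point free, contracts exactly $r$ of these $24$ components (producing the $r_0=r$ ADE-curves) and assigns $H$-degree between $1$ and $d$ to the remaining $24-r$ (producing $r_d\ge 24-r$).

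Concretely, I would start from a Weierstrass model over $k$ whose singular-fibre list is chosen to split $24=r+(24-r)$ in the desired way (for instance configurations built out of $I_n$ fibres together with enough $I_1$ fibres), whose Mordell--Weil lattice contains a section (or multi-section) sufficient to separate the $r$ contracted components from the rest, and to write $H=\alpha F+\beta O +\dots$ with integer coefficients realising the prescribed intersection pattern. Computing the Gram matrix of $\{H,O,\text{fibre components}\}$ then determines the minimal $h=H^2/2$ achievable by this construction, giving exactly the inequalities appearing in the two cases of (v). Finally, Criterion \ref{crit} is applied to confirm that $H$ is a quasi-polarization of degree $2h$.

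The two bullets correspond to two different ways of certifying the fibration's existence in characteristic $p\equiv 3\pmod 4$. In the first case $p\nmid d$ one reduces the characteristic-zero model from (iii) modulo $p$; the Mordell--Weil contribution survives good reduction, and the factor $(p+1)$ appearing in $h\ge d((p+1)d+1-p)/2$ records how the smallest admissible section-height inflates because $p$ itself now has to play the role that $p_0$ played in (iv) (one can no longer choose a smaller prime not dividing $2d$). In the second case the hypothesis $p>4\sqrt{d+1}$ lets one invoke the Hasse--Weil bound directly on a chosen elliptic fibre defined over $\FF_p$: there are enough $\FF_p$-rational points to produce the required section without reference to $p\bmod 4$, and the weaker bound $h\ge d^2-1$ from (iii) carries over unchanged.

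The hard part will be ensuring that the elliptic fibration together with its Mordell--Weil sections actually exists over $\overline k$ when $p\equiv 3\pmod 4$. In that case the discriminant form of the relevant N\'eron--Severi sublattice becomes sensitive to $\left(\tfrac{-1}{p}\right)=-1$, which can obstruct primitive embeddings into the (supersingular) K3 lattice and block a verbatim import of the argument from (iii) or (iv). The two hypotheses in (v) are calibrated precisely to avoid this obstruction --- either by good reduction from characteristic zero (first bullet) or by the Hasse--Weil lower bound on $\#E(\FF_p)$ (second bullet) --- so the technical heart of the proof is checking that no such lattice-theoretic obstruction survives under one of these assumptions.
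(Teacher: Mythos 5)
Your overall framework (an elliptic K3 surface whose reducible fibre plus nodal fibres account for all $24$ rational fibre components, together with a class $H=NF+dO+\cdots$ checked against Criterion \ref{crit}) is indeed the paper's route to parts (iii)--(v), but the actual content of (v) is not supplied, and your justification of the second bullet is wrong. In the paper the surface is the singular K3 surface with transcendental lattice $\langle 2\rangle\oplus\langle 2c_0\rangle$ (Shioda--Inose \cite{SI}, then Nishiyama's fibration \cite{Nishi} with an $\IK_{18}$ fibre, $3$-torsion and a section of height $2c_0$); after reducing modulo a prime $\p\mid p$ using potentially good reduction, the only possible failure is that two $\IK_1$ fibres merge into an $\IK_2$ fibre, which under the Shioda--Inose structure happens exactly when the CM $j$-invariant satisfies $j(E_0)\equiv 12^3\bmod\p$. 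When $p\equiv 3\bmod 4$ both the curve with $j=12^3$ and $X_\p$ may be supersingular, so this collision cannot be excluded by an ordinarity/rank argument, and the two bullets of (v) are precisely the two ways the paper excludes it. For the first bullet one does not simply ``reduce the characteristic-zero model mod $p$'': one replaces $c_0$ by $c_0'=c_0+jd$ in the same residue class mod $d$ until $c_0'$ is a non-square mod $p$, so that $(-c_0'/p)=1$, the reduction has finite height ($\rho=20$), and the finite-index inclusion $\NS(X)\hookrightarrow\NS(X_\p)$ rules out any $\IK_2$ degeneration; the term involving $(p+1)d/2$ in the bound is the cost of this shift (at most $(p+1)/2$ steps, which is where $p\nmid d$ enters), not an ``inflation of the section height.''

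For the second bullet, your appeal to the Hasse--Weil bound is a misattribution: sections of the fibration live over $\overline{\FF}_p(t)$ and their existence already follows from the specialization embedding of $\NS$, while counting $\FF_p$-points on a fibre says nothing about whether the fibre configuration degenerates. What the hypothesis $p>4\sqrt{d+1}\geq 4\sqrt{c_0}$ actually buys in the paper is Kaneko's extension of the Gross--Zagier results on singular moduli \cite{Kaneko}: it guarantees $j(E_0)\not\equiv 12^3\bmod\p$, hence no $\IK_2$ fibre appears and the counts $r_0=r$, $r_d\geq 24-r$ survive with the same bound $h\geq d^2-1$ as over $\CC$. Without this (or an equivalent) arithmetic input your argument has a genuine gap: the lattice-theoretic considerations you sketch do not control the fibre configuration, because the specialization embedding need not be primitive and, in the supersingular case, the extra Picard rank can be absorbed by a degenerate fibre rather than by the desired section of height $2c_0$.
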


\noindent
It is natural to ask whether the claims (i) and (ii)
of the above theorem hold over any field of characteristic $p$  as soon as $h > 42 d^2$ and $p \neq 2,3$ 
(as in \cite{RS-24}) -- we discuss this briefly in Remark~\ref{rem:effective}.

The outline of the paper is as follows. After presenting the set-up in Section~\ref{s:setup}, we recall the notion of extended $d$-Fano graph of a quasi-polarized K3 surface and discuss properties of its subgraphs in Section~~\ref{s:prep}. 
In the subsequent section, we recall the notion of a geometric (in a fixed degree) graph, and use the so-called intrinsic polarization 
(cf.\ \cite{degt}) to show that a hyperbolic graph can be geometric in at most finitely many degrees (Corollary~\ref{cor:finite}). After those preparations we give a proof of parts (i), (ii) of Theorem~\ref{thm}. 

 In Section~\ref{s:pf-0} we prove the existence of K3 surfaces whose Neron-Severi contains the lattice 
$U \oplus A_r \oplus \langle -2c_0\rangle$, where $c_0\in\ZZ_{\geq 2}$ using the theory of elliptic fibrations (Thm~\ref{thm:c_0}). 
In the complex case, this leads to a proof of Thm~\ref{thm}~(iii) in \S \ref{ss:pf}. 
The case of positive characteristic $p >3$ requires extra care to avoid that the fibrations degenerate;
this is discussed in Section~\ref{s:pf-p}, where we prove claims (iv), (v) of  Thm~\ref{thm}. 

In specific situations, the bounds in Theorem \ref{thm} (iv) and (v) can be improved drastically.
We discuss this for some cases in Section \ref{s:specific} (see Proposition \ref{prop:small_d});
in Theorem \ref{thm:r<=14} we also work out unconditional results with better bounds for $h$ when  $r\leq 14$.

\begin{conv}
	We assume that the base field $k$ of characteristic $p\neq 2,3$
	is algebraically closed. Indeed, Theorem~\ref{thm} stays valid under base extension, so we can make the above assumption  without loss of generality. 
	
	All  curves are assumed to be irreducible.
\end{conv}


\section{Set-up} \label{s:setup}

Let $(X,H)$ be a  
quasi-polarized K3 surface of degree $\tD$, 
i.e.\ a pair  where $X$ is a smooth K3 surface  
over an algebraically closed field $k$ of characteristic $p$, and  $H$ 
is a big, nef, base-point free  divisor of square $H^2=\tD$.

Recall that for  a big, nef, and base-point free divisor $H$,  
the linear system $|H|$ defines a
morphism
\[
\fH \, : \, X \rightarrow \XH \subset \PP^{\jD+1}
\]
which is either birational or 
of degree $2$ (in the latter case  one speaks of a hyperelliptic linear system, see \cite[p.~615]{S-D}). If the  map $\fH$ is birational, then it is the minimal resolution of  singularities of its image $\XH$. Moreover, 
$\XH$  is contained in no hyperplane of $\PP^{\jD+1}$
and all its singularities are at most Du Val. In this case we say that $(X,H)$ is a  
{\bf birationally quasi-polarized K3 surface of degree $\tD$}. 
  By abuse of notation we will write $X$ instead of $(X,H)$ whenever it leads to no ambiguity.

The methods presented in \cite{S-D} give a direct way to
check  whether a given divisor $H$ on $X$ with $H^2=\tD>0$ is quasi-ample and non-hyperelliptic.
Indeed,  one has:

\begin{crit}
\label{crit}
Let $p \neq 2$ and let  $H$ be 
a  nef divisor on a K3 surface $X$. 
If
\begin{enumerate}
\item
$H.E>2$ for every irreducible curve $E\subset X$ of arithmetic genus $1$;
\item
$H^2\geq 4$, and for $H^2=8$, the divisor  
$H$ is not $2$-divisible in $\Pic(X)$,
\end{enumerate}
then $H$ is quasi-ample and non-hyperelliptic.
\end{crit}


\section{Extended Fano graphs}
\label{s:prep}

Given a quasi-polarized K3 surface $(X,H)$ of degree $2h > 2$ and a positive integer $d$, 
we define its {\bf extended $d$-Fano graph} (cf.\ \cite{degt}, \cite{RS-24}, \cite{DR-2021}) as the set
\[
\Gfamma = \{\text{rational curves } C\subset X \text{ with } C.H \leq d\}
\]
with multiple edges corresponding to the intersection numbers $C.C'$ for $C, C'\in\Gfamma$ and no loops.
As in \cite{DR-2021} vertices of the graph $\Gfamma$ are colored 
according to the value $d_C=C.H$
(observe that, contrary to \cite{RS-24}, in this note we allow $d_C=0$;
formally, we should also note the squares $C^2$ for the vertices of $\Gfamma$,
but later we will mostly reduce to the case where $C^2=-2$, see e.g.\ Lemma \ref{lem:hyper}).

From the Hodge Index Theorem we immediately obtain (cf.\ \cite[(6.2)]{RS-24})
\begin{eqnarray}
\label{eq:C^2}
C^2 \leq \frac{(C.H)^2}{H^2} 
\end{eqnarray}
Moreover, we claim that for $C, C' \in \Gfamma$ the following inequality holds: 
\begin{equation} \label{eq-bezout-modified}
C.C' \leq (d_C+1) (d_{C'}+1) \, .
\end{equation} 
Indeed,  for positive $d_C, d_{C'}$ 
the claim follows from  \cite[inequality (6.3)]{RS-24}, 
whereas for $d_C = d_{C'} = 0$,
both curves are exceptional, so $C \neq  C'$ intersecting implies  $C.C'=1$ (by the ADE classification of  Du Val singularities).  Finally,
for $d_{C} > d_{C'} = 0$ we
consider the Gram matrix of $H$, $C$, $C'$. By 
 the Hodge Index Theorem its determinant is non-negative
 and
 the inequality  \eqref{eq-bezout-modified}
follows quite easily taking into account that $d_{C}=1$ implies $C^2=-2$.

An analogous argument yields 
 \begin{equation} \label{eq-bezout-positive}
 C.C'\leq \frac{d_C d_{C'}}{h} \;\; \mbox{ for non-negative } C^2, C'^2.
 \end{equation}

Recall (cf.\ \cite{degt}, \cite{RS-24}) that
the intersection number induces the bilinear form on 
the formal group 
$$M(X,H):=\ZZ\Gfamma\subset\mathrm{Div}(X),
$$
that we will denote as $M$ when it leads to no ambiguity.
Moreover,  for a subgraph $\SG \subset \Gfamma$  we put $M|_{\SG}$ to denote the sublattice of $M$ generated by 
the vertices of $\SG$.

If  $M|_{\SG}\otimes\RR$ is negative-definite (resp.
 negative semi-definite, with 
 nontrivial kernel) 
 we say that $M|_{\SG}$ 
  is {\bf elliptic} (resp. {\bf parabolic}). 
Finally, when $M|_{\SG}\otimes\RR$ has a one-dimensional positive-definite subspace
and none of greater dimension
we call $M|_{\SG}$ 
 {\bf hyperbolic}.

One can easily check that the colors $d_C = C.H$ 
play no role in the arguments of \cite[\S 4-5]{RS-24}
(i.e.\ once we fix a family of rational curves on the 
K3 surface $X$ such that their graph  fails to be hyperbolic, we can repeat all the arguments from 
loc.\ cit.\
without any modifications).
For the convenience of the reader we collect the necessary facts (that follow directly from the arguments in
 loc.\ cit.\ 
 below.)

\begin{lemm}  {\bf \rm (\cite[\S 4]{RS-24})}
\label{lem:elliptic}
Let $\SG \subset \Gfamma$ be a  subgraph.
	If $M|_{\SG}$ is elliptic, then it is an orthogonal sum of finitely many Dynkin diagrams (ADE-type). Moreover, 
	we have the inequality
	$$
	\#\SG \leq \rank M|_{\SG} \leq 21 \, .
	$$
\end{lemm}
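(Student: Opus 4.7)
The plan is to exploit negative-definiteness to strongly constrain both the self-intersections of the generators of $M|_\SG$ and their pairwise pairings. First I would argue that every $C\in \SG$ must be a smooth $(-2)$-curve. For a rational (hence irreducible) curve $C$ on the K3 surface $X$, adjunction gives $C^2=2p_a(C)-2\geq -2$, with equality if and only if $C$ is smooth. Since $M|_\SG$ is elliptic, the class $C$ itself has $C^2<0$, which forces $p_a(C)=0$ and hence $C^2=-2$.

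Next I would pin down the off-diagonal intersections. For two distinct vertices $C\neq C'$ of $\SG$ the irreducibility of both curves yields $C.C'\geq 0$. The $2\times 2$ principal minor
\[
\det\begin{pmatrix} -2 & C.C' \\ C.C' & -2 \end{pmatrix}=4-(C.C')^2
\]
must be positive by negative-definiteness, so $C.C'\in\{0,1\}$. Consequently the negative of the Gram matrix of $\SG$ has $2$ on the diagonal and entries $0$ or $-1$ elsewhere, which is precisely the shape of the Cartan matrix of a simply-laced root system. Invoking the classical classification of negative-definite even lattices generated by roots, I conclude that $M|_\SG$ decomposes as an orthogonal direct sum of irreducible summands of type $A_n$, $D_n$, $E_6$, $E_7$, or $E_8$.

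For the rank statement: non-degeneracy of the Gram matrix forces the vertices of $\SG$ to be linearly independent in $\NS(X)\otimes\QQ$, whence $\#\SG=\rank M|_\SG$. Finally, $M|_\SG$ embeds as a negative-definite sublattice of $\NS(X)$, whose signature is $(1,\rho(X)-1)$; since $\rho(X)\leq 22$ for any K3 surface over any field, one obtains $\rank M|_\SG\leq \rho(X)-1\leq 21$.

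The only real subtlety is the first step, namely ruling out singular rational curves (which could have $C^2\geq 0$): negative-definiteness excludes them immediately, but without this observation one might fear non-ADE configurations. Once this is noted, every remaining step is standard lattice theory and matches what is carried out explicitly in \cite{RS-24}.
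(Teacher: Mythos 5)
Your proof is correct and is essentially the argument the paper invokes by citing \cite[Lemma 4.1, Cor. 4.2]{RS-24}: negative definiteness forces $C^2=-2$ (via adjunction for irreducible rational curves) and $C.C'\in\{0,1\}$ (via the $2\times 2$ minors), the classification of positive definite simply-laced Cartan matrices gives the ADE decomposition, and a negative definite sublattice of $\NS(X)$, which has signature $(1,\rho-1)$ with $\rho\leq 22$, has rank at most $21$. The only cosmetic point is your appeal to ``non-degeneracy of the Gram matrix'' for $\#\SG=\rank M|_{\SG}$: in the paper $M|_{\SG}$ is by definition generated by the vertices inside the formal group $\ZZ\Gfamma\subset\mathrm{Div}(X)$, so this equality is automatic, and what negative definiteness actually buys (and what your final step implicitly uses) is that the induced map to $\NS(X)\otimes\QQ$ is injective, so that $M|_{\SG}$ really sits as a negative definite sublattice of $\NS(X)$.
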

\begin{proof}
See \cite[Lemma~4.1]{RS-24} and \cite[Corollary~4.2]{RS-24} 
\end{proof}
In this note we assume  $p \neq 2,3$, so the general fibre of any genus-one fibration on $X$ is smooth (see \cite{Tate-genus}), and the number of fibre components of such a fibration is bounded by the Euler--Poincar\'e characteristic  $e(X) = 24$.

\begin{lemm}  {\bf \rm (\cite[\S 5]{RS-24})}
\label{lem:parabolic}
Let $\SG \subset \Gfamma$ be a  subgraph.
If $M|_{\SG}$ is parabolic, then it is an orthogonal sum of finitely many Dynkin diagrams
	and at least one isotropic vertex or extended Dynkin diagram ($\tilde A\tilde D\tilde E$-type),
	the latter of which are again finite in total number.
	Consequently, 
	there exists a genus one fibration
	\begin{eqnarray}
	\label{eq:fibr}
	X \to \PP^1
	\end{eqnarray}
	such that
	\[
	\SG =  \{\text{rational fibre components $\Theta$ of \eqref{eq:fibr} with } \deg(\Theta)\leq d\}.
	\]
	In particular, for such a subgraph $\SG \subset \Gfamma$, the following inequalities hold:  
	\begin{eqnarray}
	\label{eq:<=24}
\mbox{ \hspace*{2ex}}	\#\SG \leq \# \{\text{rational fibre components of \eqref{eq:fibr}}\} \leq  24.
	\end{eqnarray}
\end{lemm}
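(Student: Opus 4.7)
The plan is to transfer the lattice-theoretic classification of parabolic root lattices to the geometry of $X$, produce the required genus one fibration from the kernel of $M|_{\SG}$, and then match $\SG$ with the rational components of its reducible fibres.

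First, I would establish the algebraic decomposition. Since $M|_{\SG}\otimes\RR$ is negative semi-definite with non-trivial kernel, the classical classification of negative semi-definite lattices generated by classes of square $-2$ together with isotropic vectors yields that $M|_{\SG}$ splits as an orthogonal direct sum of finite-type Dynkin diagrams ($A_n, D_n, E_n$), affine diagrams ($\tilde A_n, \tilde D_n, \tilde E_n$), and rank-one isotropic summands (corresponding to vertices $C$ with $C^2=0$, i.e.\ nodal or cuspidal rational curves in a fibre). The vertices of $\SG$ satisfy $C^2\in\{-2,0\}$: by \eqref{eq:C^2} and $C.H\leq d$, this is forced once $h$ is large compared to $d$, mirroring \cite[Lemma~5.1]{RS-24}. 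The non-trivial kernel forces at least one affine or isotropic summand. Finiteness of the number of summands follows from $\#\SG\leq\#\Gfamma<\infty$, a fact already available from the theory of Fano graphs on quasi-polarized K3s.

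Second, I would extract the genus one fibration. Each affine summand $\tilde\Delta\subset\SG$ gives an effective isotropic class $F$ (namely, the sum of its components weighted by the Kodaira multiplicities), and any isotropic vertex of $\SG$ is itself such a class. Thus $F$ is an effective divisor with $F^2=0$. On the K3 surface $X$, Riemann--Roch combined with $h^2(F)=h^0(-F)=0$ gives $h^0(F)\geq 2$, and a short reflection argument in $(-2)$-curves shows $F$ can be replaced by a nef representative of the same kernel class (reflections preserve $F^2=0$ and the parabolic type of $M|_{\SG}$). Then $|F|$ defines a genus one fibration $X\to\PP^1$ as in \eqref{eq:fibr}; the hypothesis $p\neq 2,3$ is what guarantees that the generic fibre is smooth (\cite{Tate-genus}) and there are no quasi-elliptic complications.

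Third, I would identify $\SG$ with the set of rational fibre components of degree $\leq d$. For any $C\in\SG$, parabolicity gives $C.F=0$, so $C$ is vertical for the fibration; since $C$ is rational with $C^2\in\{-2,0\}$, it is either a full fibre of type $\IK_1$ or $\II$ or a component of a reducible fibre. Conversely, any rational fibre component $\Theta$ with $\Theta.H\leq d$ lies in $\Gfamma$ and pairs trivially with every vertex of $\SG$, hence naturally extends the parabolic decomposition and appears in $\SG$. The bound in \eqref{eq:<=24} then comes from the fact that the total contribution of singular fibres to the topological Euler characteristic is $e(X)=24$, and each rational component contributes at least $1$.

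The main obstacle is the nefness/reflection step in producing the actual fibration: a priori the isotropic class $F$ coming from the kernel need not be nef, because there might be $(-2)$-curves outside $\SG$ pairing negatively with $F$. Handling this cleanly requires the Weyl group machinery and a careful argument (as in \cite[\S5]{RS-24}) to reduce to a nef isotropic class without disturbing the parabolic structure on $\SG$; the exclusion of $p=2,3$ is essential to avoid genuinely quasi-elliptic phenomena that would break the identification of $F$ with a fibre class.
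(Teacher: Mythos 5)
Your route is the one the paper itself takes, since its proof of this lemma is just the citation of \cite[\S 5]{RS-24}: decompose the parabolic lattice into ADE summands plus affine diagrams and isolated isotropic vertices, use the resulting isotropic class as a fibre class of a genus one fibration, and bound the number of rational fibre components by $e(X)=24$ (here $p\neq 2,3$ guarantees a smooth general fibre). Two of your steps, however, are not right as written. First, you justify $C^2\in\{-2,0\}$ via \eqref{eq:C^2} ``once $h$ is large compared to $d$'', but the lemma carries no hypothesis on $h$. None is needed: parabolicity means $M|_{\SG}$ is negative semi-definite, so $C^2\leq 0$ for every vertex, while $C^2=2p_a(C)-2\geq -2$ is even for an irreducible curve on a K3; hence $C^2\in\{-2,0\}$ unconditionally, and this is how the argument must go.

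Second, the ``main obstacle'' you flag --- making the kernel class nef by Weyl reflections --- is vacuous, and invoking it would undercut your own third step. The class $F$ attached to an affine summand is an effective divisor supported on curves of $\SG$ taken with the Kodaira multiplicities; it meets each of its own components trivially (it generates the radical of that summand) and meets every other irreducible curve non-negatively, so it is nef as it stands; likewise an isotropic vertex is an irreducible curve of square zero, hence nef. Thus $|F|$ (after the standard reduction of a nef isotropic class to a multiple of an elliptic pencil) yields \eqref{eq:fibr} directly, with no reflection step. Had you genuinely replaced $F$ by a reflected class $w(F)$, your verticality argument, which uses $C.F=0$ for all $C\in\SG$ (valid because $F$ lies in the radical of $M|_{\SG}$), would no longer be justified, since $C.w(F)=0$ is not automatic. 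Finally, your ``conversely'' sentence is a non sequitur: a rational fibre component of degree at most $d$ lies in $\Gfamma$ but need not be a vertex of the given subgraph $\SG$, so your argument only yields the inclusion of $\SG$ into the set of such fibre components --- which is all that \eqref{eq:<=24} and the paper's applications use, and all one can expect for a proper subgraph; the stated equality is only meaningful for the full (or a maximal parabolic) graph.
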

\begin{proof}
	See \cite[Lemma~5.1]{RS-24} and \cite[Lemma~5.3]{RS-24}. 
\end{proof}

Finally, let us state two lemmata that we will also need for the proof
of Theorem~\ref{thm}.

\begin{lemm}  
	\label{lem:hyper}
If $H^2 > 4 d^2$ and $M(X,H)$ is hyperbolic, 
then
$$
C^2 = -2 \quad \forall \, C \in \Gfamma.
$$
\end{lemm}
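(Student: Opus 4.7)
The plan is to combine the Hodge index inequality \eqref{eq:C^2} with the parity of $C^2$ on a K3 surface to conclude $C^2 \in \{-2, 0\}$, and then rule out $C^2 = 0$ by showing that otherwise every curve in $\Gfamma$ lies in a fibre of a common genus-one fibration, forcing $M(X,H)$ to be negative semi-definite rather than hyperbolic.

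First, for $C \in \Gfamma$ the inequality \eqref{eq:C^2} together with $C \cdot H \leq d$ and $H^2 > 4d^2$ gives $C^2 < 1/4$. Adjunction on a K3 surface yields $C^2 = 2 p_a(C) - 2$, an even integer at least $-2$, so $C^2 \in \{-2, 0\}$.

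Suppose, towards a contradiction, that some $C_0 \in \Gfamma$ has $C_0^2 = 0$. Riemann--Roch gives $h^0(C_0) \geq 2$, and since $p \neq 2,3$ the linear system $|C_0|$ induces a genus-one fibration $\pi \colon X \to \PP^1$ with $C_0$ as a singular fibre. I will show $C \cdot C_0 = 0$ for every $C \in \Gfamma$. If $C^2 = 0$, then \eqref{eq-bezout-positive} yields $C \cdot C_0 \leq d_C d_{C_0}/h \leq d^2/h < 1/2$ (using $h > 2d^2$), whence $C \cdot C_0 = 0$. If $C^2 = -2$, write $a = C \cdot C_0$ and apply the Hodge index theorem to the sublattice $\langle H, C, C_0 \rangle$ of $\NS(X)$: it contains the positive class $H$ and sits inside a form of signature $(1,\rho-1)$, so its Gram determinant is non-negative,
\[
0 \;\leq\; -2 h a^2 + 2 a\, d_C d_{C_0} + 2 d_{C_0}^2.
\]
Bounding $d_C, d_{C_0} \leq d$ gives $h a^2 \leq d^2 (a + 1)$. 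For $a \geq 1$ the ratio $(a+1)/a^2$ attains its maximum $2$ at $a = 1$, forcing $h \leq 2 d^2$ and contradicting $h > 2 d^2$. Hence $a = 0$ in this case as well.

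Consequently every $C \in \Gfamma$ is orthogonal to the fibre class $[C_0]$ and is therefore supported in a fibre of $\pi$. By Zariski's lemma the sublattice of $\NS(X)$ spanned by the components of fibres of $\pi$ is negative semi-definite (with $C_0$ in its radical), so $M(X,H)$ is negative semi-definite, contradicting the hypothesis that $M(X,H)$ is hyperbolic. The main obstacle is the $C^2 = -2$ branch of the middle step: \eqref{eq-bezout-positive} does not apply, and a direct rank-three Hodge-index computation on $\langle H, C, C_0 \rangle$ is unavoidable. This is the technical heart of the lemma.
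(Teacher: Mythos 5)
Your proof is correct and follows essentially the same route as the paper: reduce to $C^2\in\{0,-2\}$ via \eqref{eq:C^2}, then rule out an isotropic vertex $C_0$ by showing $C.C_0=0$ for all $C\in\Gfamma$ through the same rank-three Gram determinant/Hodge Index computation (the $C^2=0$ sub-case via \eqref{eq-bezout-positive} is just that determinant argument repackaged). The only cosmetic difference is at the end, where you invoke the genus-one fibration $|C_0|$ and Zariski's lemma to contradict hyperbolicity, while the paper simply observes that \eqref{eq:isotropicdisjoint} forces $M(X,H)$ to be parabolic.
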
	
\begin{proof} By \eqref{eq:C^2} we have
	 \begin{eqnarray}
	 \label{eq:selfsmal}
	 C^2 \in\{0, -2\} \;\;\; \forall \, C\in\Gfamma,
	 \end{eqnarray}
	 Assume that there is an isotropic vertex $C_0 \in \Gfamma$. We claim that 
	  \begin{eqnarray}
	 \label{eq:isotropicdisjoint}
	  C_0.C = 0  \;\;\; \forall \, C\in\Gfamma
	 \end{eqnarray}
Indeed let us fix $C\in\Gfamma\setminus\{C_0\}$ and  put $x := C.C_0$.	If $C^2=0$, then the determinant of the Gram matrix of $H,C_0,C$ reads:
	$$
	(-1)\cdot x \cdot (H^2\cdot x - d_{C_0} \cdot d_{C}) 
	$$ 
and it is strictly negative for $x > 0$, as the assumptions yield the inequality $(H^2\cdot x - d_{C_0} \cdot d_C) \geq 2d^2$ . This is impossible by the Hodge Index Theorem. 

Thus we have  $C^2=-2$, and  the determinant of the Gram matrix of $H,C_0,C$ reads:
$$
-H^2 \cdot x +2 \cdot x \cdot d_{C_0} \cdot d_{C} + 2 \cdot  d_{C_0}^2
$$
which is negative, within the given range of $H^2$, for $x >0$. Therefore \eqref{eq:isotropicdisjoint} follows from
 the Hodge Index Theorem. 
 
But then,  \eqref{eq:isotropicdisjoint} implies that 
$M(X,H)$ is parabolic, which is the desired contradiction.  Thus no vertex of $\Gfamma$ is isotropic.
\end{proof}	

\begin{lemm}  
	\label{lem:extendeddynkin}
	If $H^2 > 4 d^2$ and $\#\Gfamma	> 24$, then  $\Gfamma$
contains an extended Dynkin diagram.
\end{lemm}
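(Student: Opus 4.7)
The plan is to reduce the statement, via the signature classification, to the case where a parabolic subgraph of $\Gfamma$ can be exhibited, and then apply Lemma \ref{lem:parabolic}.

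First, I would use the hypothesis $\#\Gfamma > 24$ to constrain the signature of $M|_\Gfamma$. Lemma \ref{lem:elliptic} excludes the elliptic case ($\#\Gfamma \leq 21$) and Lemma \ref{lem:parabolic} excludes the parabolic case ($\#\Gfamma \leq 24$), so $M|_\Gfamma$ must be hyperbolic; Lemma \ref{lem:hyper} then gives $C^2 = -2$ for every $C \in \Gfamma$. In particular, no vertex of $\Gfamma$ is isotropic, so for any parabolic subgraph $\SG \subset \Gfamma$ the orthogonal decomposition supplied by Lemma \ref{lem:parabolic} cannot contain an isotropic vertex summand, and hence must contain a genuine extended Dynkin diagram. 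The remaining task is therefore to exhibit a single parabolic subgraph inside $\Gfamma$.

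Next, I would decompose $\Gfamma$ into its connected components. The intersection lattice splits orthogonally along these components, and a sum of negative (semi)definite pieces cannot be hyperbolic, so exactly one component $\Gfamma_*$ has hyperbolic intersection form while all the others are elliptic. By Lemma \ref{lem:elliptic} the elliptic components contribute at most $21$ vertices in total, giving $\#\Gfamma_* \geq \#\Gfamma - 21 > 3$. Thus $\Gfamma_*$ is a connected graph on $(-2)$-vertices with at least four vertices whose intersection form has indefinite signature.

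The main obstacle is the following purely lattice-theoretic step: a connected graph of $(-2)$-vertices on at least three vertices whose intersection form is indefinite must contain a parabolic sub-configuration. I would argue as follows. If some pair $C, C' \in \Gfamma_*$ satisfies $C.C' \geq 2$, the Gram matrix of $\{C, C'\}$ has non-negative determinant; in case $C.C' = 2$ it is already $\tilde A_1$, and in case $C.C' \geq 3$ a further vertex adjacent to this pair provides the desired parabolic extension by a short direct analysis. In the simply-laced regime, I would take an inclusion-maximal elliptic subgraph $\SG_0 \subsetneq \Gfamma_*$ and, using the connectedness of $\Gfamma_*$, find a vertex $v \in \Gfamma_* \setminus \SG_0$ adjacent to $\SG_0$; by maximality the enlargement $\SG_0 \cup \{v\}$ is non-elliptic, and the projection analysis of $v$ onto $\SG_0 \otimes \QQ$ shows that the enlargement is parabolic unless the projection has self-intersection strictly below $-2$, a situation that cannot persist for all choices of $v$ without producing a configuration already containing an affine Dynkin subdiagram. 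Either route follows closely the template of \cite[\S 5]{RS-24}, where the colours $d_C$ play no role and the argument is entirely lattice-theoretic.
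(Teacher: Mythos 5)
Your opening reduction (elliptic and parabolic are excluded by Lemmas \ref{lem:elliptic} and \ref{lem:parabolic} since $\#\Gfamma>24$, hence $M(X,H)$ is hyperbolic and Lemma \ref{lem:hyper} gives $C^2=-2$ for all vertices) matches the paper, and the observation that a parabolic subgraph with no isotropic vertex must contain a genuine extended Dynkin diagram is sound. The gap is in your "main step". You phrase it as a purely lattice-theoretic claim -- a connected graph of $(-2)$-vertices on at least three vertices with indefinite form contains a parabolic sub-configuration -- and that claim is simply false without a bound on the edge multiplicities: take $(-2)$-vectors $u,v,w$ with $u.v=3$, $v.w=1$, $u.w=0$; the form is hyperbolic, but every proper subgraph is elliptic or hyperbolic, so no parabolic subgraph exists. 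Your proposed repair in the case $C.C'\geq 3$ ("a further vertex adjacent to this pair provides the desired parabolic extension") cannot work in principle: any sublattice containing the rank-two hyperbolic lattice spanned by such a pair is itself non-parabolic. The missing ingredient is exactly the step the paper takes: using the hypothesis $H^2>4d^2$ and the colours $d_C,d_{C'}\leq d$, the Hodge Index Theorem applied to the Gram matrix of $H,C,C'$ forces $C.C'\in\{0,1,2\}$ for all pairs of $(-2)$-vertices. You never invoke $H$ at this point, so the case $C.C'\geq 3$ is left genuinely open in your argument.

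A secondary weakness: in the simply-laced regime your maximal-elliptic-subgraph argument does not close. Maximality only tells you the enlargement $\SG_0\cup\{v\}$ is non-elliptic; it may be hyperbolic, and your final clause ("cannot persist for all choices of $v$ without producing a configuration already containing an affine Dynkin subdiagram") is essentially the statement you are trying to prove. The paper avoids this by citing the standard fact that a simple graph which is not a disjoint union of Dynkin diagrams contains an extended Dynkin diagram; since $\#\Gfamma>24>21$, Lemma \ref{lem:elliptic} rules out the Dynkin case. If you insert the Hodge-index bound $C.C'\leq 2$ (treating $C.C'=2$ as $\tilde A_1$, as you and the paper both do) and replace your projection argument by this citation, your route collapses to the paper's proof; there is no genuinely different working argument here.
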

\begin{proof}
As we have shown in the previous lemma, we have $C^2 = -2$ for all   $C\in\Gfamma$.  
Then, the Hodge Index Theorem 
can be seen to imply that
\begin{eqnarray*}
C.C' \in\{0,1,2\} \;\; \text{ for all } \;\; C\neq C'\in\Gfamma. 
\end{eqnarray*}
If we have a pair of curves in $\Gfamma$ with $C.C'=2$ then  they form the extended Dynkin diagram $\tilde A_1$.  Otherwise,
the claim follows from the well-known fact that any simple graph that is not a Dynkin
diagram contains an extended one.
 \end{proof}

\section{Proof of Theorem~\ref{thm} (i), (ii)}

 Given a colored graph $\Gamma$ without loops we define a bilinear form on the lattice
$$ 
M_{\Gamma}:= \ZZ \Gamma
$$
by defining $v.w$ as the number of vertices
between $v \neq w \in \Gamma$ and   $v^2 = -2$ for each $v \in \Gamma$. 
We call such a graph {\bf hyperbolic} (resp. 
{\bf parabolic}) iff
$M_{\Gamma}\otimes\RR$ has a one-dimensional positive-definite subspace
and none of greater dimension (resp. $M_{\Gamma}\otimes\RR$ is negative semi-definite with non-trivial kernel).

Let us fix a  hyperbolic graph $\Gamma$.
As in \cite{degt} we put $L := M_{\Gamma}/\ker(M_{\Gamma})$ and  consider the  non-degenerate lattice
\[
 L_{\polD} := (\ZZ\Gamma + \ZZ H)/\ker(\ZZ\Gamma + \ZZ H), 
\]
where $H$  satisfies the conditions
\[
H^2=\tD \mbox{ and } C.H_\Gamma=d_C \;\;\;\forall \, C\in\Gamma.
\]

One of crucial points in \cite{RS-24} is the following lemma (see also \cite[Proposition~2.8]{degt}), that remains true in our set-up.

\begin{lemm}  {\bf \rm (cf.\ \cite[Lemma~6.1]{RS-24})}
	\label{lem:embed}
	If $L_{\polD}$ is hyperbolic, then $L$ embeds into $L_{\polD}$.
\end{lemm}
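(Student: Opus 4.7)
The plan is to construct the required embedding from the tautological inclusion $\ZZ\Gamma \hookrightarrow \ZZ\Gamma + \ZZ H$ by passing to the quotients defining $L$ and $L_H$, and to deduce injectivity from the hyperbolicity hypothesis.

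First I would carry out the following reduction. The inclusion $\ZZ\Gamma \subseteq \ZZ\Gamma+\ZZ H$ is a morphism of bilinear lattices, and composing it with the quotient map yields a map $\varphi\colon\ZZ\Gamma\to L_H$ whose kernel is $\ZZ\Gamma\cap\ker(\ZZ\Gamma+\ZZ H)$. For $\varphi$ to descend to $L=\ZZ\Gamma/\ker(M_\Gamma)$ as a well-defined injective lattice homomorphism, it is necessary and sufficient that
\[ \ker(M_\Gamma) \;=\; \ZZ\Gamma\cap\ker(\ZZ\Gamma+\ZZ H). \]
The inclusion $\supseteq$ is automatic, while the reverse amounts to the implication
\[ v\in\ker(M_\Gamma) \;\Longrightarrow\; v.H=0, \]
since a $v\in\ker(M_\Gamma)$ already pairs trivially with every $w\in\ZZ\Gamma$.

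To establish this implication I would argue by contradiction using hyperbolicity of $L_H$. Assume some $v\in\ker(M_\Gamma)$ satisfies $v.H\neq 0$; in particular $v^2=0$ and $v.w=0$ for every $w\in\ZZ\Gamma$. Hyperbolicity of $\Gamma$ provides an $x\in\ZZ\Gamma$ with $x^2>0$; set $d_x:=x.H$. A short check of linear independence of the images $\bar x,\bar v,\bar H\in L_H\otimes\RR$---pair a hypothetical relation $a\bar x+b\bar v+c\bar H=0$ successively with $\bar v$ (forcing $c=0$), then with $\bar x$ (forcing $a=0$), and use that $v.H\neq 0$ means $v\notin\ker(\ZZ\Gamma+\ZZ H)$, so $\bar v\neq 0$ and hence $b=0$---shows these three vectors span a genuine $3$-dimensional subspace of $L_H\otimes\RR$.

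The final step is to compute the signature of this subspace. Replacing $\bar H$ by its component $\bar H':=\bar H-(d_x/x^2)\bar x$ orthogonal to $\bar x$, the Gram matrix becomes block-diagonal with blocks $(x^2)$ and
\[ \begin{pmatrix} 0 & v.H \\ v.H & 2h-d_x^2/x^2 \end{pmatrix}\!, \]
of signatures $(1,0)$ and $(1,1)$ respectively---the latter because its determinant $-(v.H)^2$ is strictly negative. Hence the subspace carries signature $(2,1)$, incompatible with $L_H$ being hyperbolic, and the contradiction proves the implication. The only substantive point is this signature computation, which hinges on $v^2=0$ making one diagonal entry of the $2\times 2$ block vanish; everything else is formal linear algebra.
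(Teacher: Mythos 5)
Your proposal is correct and follows essentially the same route as the paper: you reduce the embedding to the inclusion $\ker(M_\Gamma)\subset\ker(\ZZ\Gamma+\ZZ H)$ and rule out a vector $v\in\ker(M_\Gamma)$ with $v.H\neq 0$ by exhibiting, together with some $x\in\ZZ\Gamma$ of positive square, a rank-three subspace of signature $(2,1)$, contradicting hyperbolicity. The paper phrases this as the single computation that the Gram determinant of $v$, $x$, $H$ equals $-x^2(v.H)^2<0$, which is exactly the product of the block determinants in your orthogonal decomposition, so the two arguments coincide.
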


\begin{proof} If we can find a vector   $w\in\ker(\ZZ\Gamma)\setminus\ker(\ZZ\Gamma + \ZZ H)$, then,  we have 
$$w.H \neq 0.$$  
Now, as in the  proof of \cite[Lemma~6.1]{RS-24} we can 
choose a vector $x\in M_{\Gamma}$ with $x^2$ positive, and
compute the determinant of the Gram matrix of $w$, $x$ and $H$. The latter is negative, as it reads 
	$$-x^2(w.H)^2,$$
 	 so  the contradiction follows. Thus 
 	 we have shown the inclusion
		 \[
	 \ker(\ZZ\Gamma)\subset\ker(\ZZ\Gamma + \ZZ H).
	 \]
	 and the proof is complete.
\end{proof}

Given a graph $\Gamma$ we can check 
whether there is a vector $H_\Gamma \in L\otimes\QQ$ that satisfies
the conditions
$$
C.H_\Gamma=d_C \;\;\;\forall \, C\in\Gamma
$$
 (observe that the above system of equations may be overdetermined). If such a vector $H_\Gamma$ exists, we call   it {\bf an intrinsic polarization} (cf.\ \cite{degt}). 
By definition, there is at most one $H_\Gamma$, so the (rational) number  $H_\Gamma^2$ is determined by the colored graph $\Gamma$, provided the intrinsic polarization exists.

Lemma~\ref{lem:embed} enables us to repeat
the argument from \cite{degt} (see also 
\cite[Proposition~6.2]{RS-24}): 
for a hyperbolic  $L_{\polD}$
we consider
the orthogonal decomposition
\[
L_{\polD}\otimes\QQ = (L\otimes\QQ) \perp (L^\perp\otimes\QQ).
\]
and represent $H$ as 
\[
H = H_\Gamma + H_\Gamma^\perp, \mbox{ where } H_\Gamma\in L\otimes\QQ, \;\; H_\Gamma^\perp\in L^\perp\otimes\QQ.
\]
Since $L^\perp$ is either zero or negative-definite,
the (rational) number  $(H_\Gamma^\perp)^2 \leq 0$ is non-positive. In this way, we obtain the following proposition.

\begin{prop}  {\bf \rm (cf.\ \cite[Proposition~6.2]{RS-24})}
	\label{prop:h^2}
	If $L_{\polD}$ is hyperbolic, then $H_\Gamma$ exists and $\tD \leq H_\Gamma^2$.
\end{prop}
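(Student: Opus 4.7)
The plan is to combine Lemma~\ref{lem:embed} with a short signature argument, essentially following the recipe already sketched in the paragraph immediately preceding the proposition. Since $\Gamma$ is hyperbolic, the quotient $L = M_{\Gamma}/\ker(M_{\Gamma})$ is non-degenerate of signature $(1,\rank L - 1)$. By Lemma~\ref{lem:embed}, the natural inclusion $\ZZ\Gamma \hookrightarrow \ZZ\Gamma + \ZZ\polD$ descends to an isometric embedding $L \hookrightarrow L_{\polD}$. As $L_{\polD}$ is itself hyperbolic by hypothesis, I would deduce that the orthogonal complement $L^{\perp} \subset L_{\polD}\otimes\QQ$ is either zero or negative-definite, because $L$ already accounts for the unique positive direction in $L_{\polD}\otimes\RR$.

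Next, I would exploit the resulting orthogonal splitting $L_{\polD}\otimes\QQ = (L\otimes\QQ) \perp (L^{\perp}\otimes\QQ)$ to decompose
\[
\polD = H_{\Gamma} + H_{\Gamma}^{\perp}, \qquad H_{\Gamma} \in L\otimes\QQ, \quad H_{\Gamma}^{\perp} \in L^{\perp}\otimes\QQ.
\]
For every $C \in \Gamma$, viewed as an element of $L$, orthogonality of the decomposition gives $C.H_{\Gamma}^{\perp} = 0$, hence $C.H_{\Gamma} = C.\polD = d_{C}$. This exhibits the rational vector $H_{\Gamma}$ as the intrinsic polarization of $\Gamma$ and settles its existence. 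The Pythagorean identity $\polD^{2} = H_{\Gamma}^{2} + (H_{\Gamma}^{\perp})^{2}$, combined with $(H_{\Gamma}^{\perp})^{2} \leq 0$, then immediately yields $\tD = \polD^{2} \leq H_{\Gamma}^{2}$.

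The main (and essentially only) obstacle I anticipate is ensuring that $L$ really sits inside $L_{\polD}$ as a non-degenerate sublattice, so that the orthogonal projection of $\polD$ genuinely lands in $L\otimes\QQ$ and is not corrupted by a hidden kernel. This is precisely what Lemma~\ref{lem:embed} provides, via the inclusion $\ker(\ZZ\Gamma) \subset \ker(\ZZ\Gamma + \ZZ\polD)$; once this is in hand, the remainder is elementary linear algebra over $\QQ$, and the argument proceeds verbatim along the lines of \cite{degt} and \cite[Proposition~6.2]{RS-24}.
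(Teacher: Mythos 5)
Your proposal is correct and is essentially the paper's own argument: it invokes Lemma~\ref{lem:embed} to embed $L$ into $L_{\polD}$, uses the orthogonal splitting $L_{\polD}\otimes\QQ = (L\otimes\QQ)\perp(L^{\perp}\otimes\QQ)$ to define $H_{\Gamma}$ as the projection of $\polD$ (which satisfies $C.H_{\Gamma}=d_C$, hence is the intrinsic polarization), and concludes $\tD \leq H_{\Gamma}^2$ from the negative (semi-)definiteness of $L^{\perp}$, exactly as sketched in the paragraph preceding the proposition. No gaps to report.
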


After this prelude on graphs we can come back to K3 surfaces.
Given a graph $\Gamma$ that is colored with integers in $\{0, \ldots, d\}$ we imitate 
 \cite[Definition~3.6]{DR-2021} (see also  \cite[Theorem~3.9]{DR-2021})
and 
call it {\bf geometric in degree $\tD$}
 if and only if there exists 
a birationally
quasi-polarized  K3 surface  $(X,H)$ of degree $\tD$
such that 
\begin{equation} \label{eq:realization}
\Gamma \cong \Gfamma \, .
\end{equation}
Observe that, for $d=1$,  the graph we call {\sl geometric} 
is {\sl $2$-geometric} in the sense of  \cite[Definition~3.6]{DR-2021}. Still, this simplification 
 leads to no ambiguity in the sequel. 

Finally, 
as an  immediate consequence
of Proposition~\ref{prop:h^2} we obtain the following corollary (cf.\ \cite[Corollary~6.4]{RS-24}) that we will use in the proof of  Theorem~\ref{thm}~(ii).

\begin{cor} 
	\label{cor:finite}
	If
	$\Gamma$ is hyperbolic, then 
	there are at most finitely many integers $h$ such that 
	the graph	$\Gamma$  is geometric in degree $\tD$.
\end{cor}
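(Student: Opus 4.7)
The plan is to combine Proposition~\ref{prop:h^2} with the observation that, once the intrinsic polarization $H_\Gamma\in L\otimes\QQ$ exists, the rational number $H_\Gamma^2$ is an invariant of the colored graph $\Gamma$ alone, independent of $h$. Thus, if I can show that $L_\polD$ is hyperbolic whenever $\Gamma$ is geometric in degree $\tD$ (for $h$ sufficiently large), the estimate $\tD\leq H_\Gamma^2$ from Proposition~\ref{prop:h^2} will force $h$ into a finite set of positive integers.

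I would carry this out in three steps. First, I dispose of the range $\tD\leq 4d^2$, which already contains only finitely many integers and hence can be discarded from the outset. For $\tD>4d^2$, fix any realization $(X,H)$ of $\Gamma$ in the sense of \eqref{eq:realization}; then $M(X,H)$ is abstractly isomorphic to the hyperbolic lattice $M_\Gamma$, so Lemma~\ref{lem:hyper} gives $C^2=-2$ on $X$ for every $C\in\Gfamma$, matching the $-2$-convention used in the abstract graph $\Gamma$. Second, the assignment $C\mapsto C$, $H\mapsto H$ therefore extends to a homomorphism $\ZZ\Gamma+\ZZ H\to\Pic(X)$ of bilinear forms (the conditions $H^2=\tD$ and $C.H=d_C$ are by construction those built into the left-hand side), and passes to an embedding $L_\polD\hookrightarrow\Pic(X)$ after quotienting by the kernel, in complete analogy with Lemma~\ref{lem:embed}. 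Since $\Pic(X)$ has signature $(1,\rho-1)$, while $L_\polD$ inherits a positive direction from the hyperbolic sublattice $L$, the lattice $L_\polD$ must be hyperbolic. Third, Proposition~\ref{prop:h^2} now applies to produce the intrinsic polarization $H_\Gamma$ together with the bound $\tD=H^2\leq H_\Gamma^2$; since the right-hand side is a fixed rational number depending only on $\Gamma$, the integer $h$ is bounded, as desired.

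The main delicate point in this sketch is the compatibility of self-intersections between the abstract graph and its geometric realization, since the formal convention $C^2=-2$ on $\Gamma$ need not automatically match $C^2$ on the K3 surface $X$ (a priori, $C^2\in\{0,-2\}$ by \eqref{eq:selfsmal}). This is precisely what forces the preliminary reduction to $\tD>4d^2$ via Lemma~\ref{lem:hyper}; everything below that threshold contributes only finitely many $h$ and can be absorbed harmlessly. Beyond this, the argument is a direct transcription of the lattice-theoretic scheme of \cite{degt} and \cite[Cor.~6.4]{RS-24} to the present extended setting with $d_C=0$ allowed.
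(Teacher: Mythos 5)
Your route is the paper's route: the corollary is meant to follow from Proposition~\ref{prop:h^2} together with the fact that $H_\Gamma^2$ is an invariant of the colored graph alone, once one knows that a geometric realization forces $L_{\polD}$ to be hyperbolic; your step three and the final bound $\tD\le H_\Gamma^2$ are exactly what the paper has in mind. But the way you justify the key compatibility step is circular. You invoke Lemma~\ref{lem:hyper} to conclude $C^2=-2$ for all $C\in\Gfamma$, yet the hypothesis of that lemma is that the \emph{geometric} lattice $M(X,H)$ is hyperbolic, and you derive this from the assertion that $M(X,H)$ is ``abstractly isomorphic to the hyperbolic lattice $M_\Gamma$''. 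That identification presupposes precisely the equality $C^2=-2$ you are trying to prove: the isomorphism in \eqref{eq:realization} is one of colored graphs, not of lattices, and a priori $C^2\in\{0,-2\}$ by \eqref{eq:C^2}, respectively \eqref{eq:selfsmal}. As written, the hypothesis of Lemma~\ref{lem:hyper} is never verified from the corollary's actual hypothesis, which concerns the abstract graph $\Gamma$.

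The repair is short, and you should make it explicit. Since every irreducible curve on a K3 surface has $C^2\ge -2$, the Gram matrix of $M(X,H)$ differs from that of $M_\Gamma$ only by the non-negative diagonal entries $C^2+2$; hence any real vector of positive square for $M_\Gamma$ also has positive square for $M(X,H)$. As $M(X,H)$ is a sublattice of $\NS(X)$, which has signature $(1,\rho-1)$, it cannot acquire a second independent positive direction, so $M_\Gamma$ hyperbolic forces $M(X,H)$ hyperbolic, and only now does Lemma~\ref{lem:hyper} (for $\tD>4d^2$) give $C^2=-2$ and the identification $M(X,H)=M_\Gamma$. Two minor further points: you do not need an embedding $L_{\polD}\hookrightarrow\Pic(X)$ (the kernel of the map $\ZZ\Gamma+\ZZ H\to\Pic(X)$ is only contained in the radical, which may be strictly smaller); all Proposition~\ref{prop:h^2} requires is the signature count, namely at most one positive eigenvalue because the form is pulled back from $\NS(X)$, and at least one coming from the image of $M_\Gamma$ in the quotient by the radical. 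Likewise the positive direction of $L_{\polD}$ should be taken from that image directly rather than via Lemma~\ref{lem:embed}, whose hypothesis is again hyperbolicity of $L_{\polD}$. With these adjustments your argument is complete and agrees with the paper's.
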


After those preparations we can prove  Theorem~\ref{thm} (i) and (ii). 

\subsection{Proof of Theorem~\ref{thm}~(i)} 

We  fix an integer $d>0$, and consider the set
	\begin{eqnarray*}
{\mathcal H}&:=&\{h \in \NN_{> 2d^2} \mbox{ such that  there exists a graph }
	 \Gamma \mbox{ with } \mbox{colors in $\{0, \ldots, d\}$}\\
& & \mbox{ and } \#\Gamma \geq 25,  \mbox{ that is geometric in degree $\tD $} 
	 \} \, .
	\end{eqnarray*}
We claim that 
\begin{equation} \label{eq:Hfinite}
 {\mathcal H } \mbox{ is finite}.
\end{equation}
Indeed, by 
Lemmata \ref{lem:elliptic} -- \ref{lem:hyper}, all vertices
of graphs $\Gamma \cong \Gfamma$ that 
appear in the definition of ${\mathcal H }$ are $(-2)$-curves, so the lattices $M(X,H)|_{\SG}$ and $M_{\SG}$ coincide for all subgraphs $G$ of the extended Fano graphs we consider. Observe that Lemmata~\ref{lem:elliptic},~\ref{lem:parabolic} 
combined with the Hodge Index Theorem imply that all Fano graphs in question are hyperbolic.
Moreover,  by Lemma~\ref{lem:extendeddynkin}, for each such  $\Gamma$, we can choose 
a maximal parabolic subgraph  $\Gamma_0\subset\Gamma$
and a vertex $v_0 \in (\Gamma \setminus  \Gamma_0)$. Then, the subgraph $\SG \subset\Gamma$ with vertices in
\begin{equation} \label{eq:enhance}
\Gamma_0 \cup v_0
\end{equation}
 is hyperbolic (it can be neither elliptic nor parabolic). Therefore, Proposition~\ref{prop:h^2} implies that the intrinsic polarization $H_G$ exists and we have 
 \begin{equation} \label{eq:degineq}
 H^2 \leq H_G^2
 \end{equation}
Lemma~\ref{lem:parabolic} yields that there are finitely many parabolic subgraphs of geometric graphs (for a fixed $d$), so there are finitely many possibilities for $\Gamma_0$ in \eqref{eq:enhance}. Moreover,  \eqref{eq-bezout-modified} implies that there are finitely many ways to obtain a hyperbolic $\SG$ from a fixed parabolic $\Gamma_0$ by adding a vertex as in \eqref{eq:enhance}. Thus the set of degrees of intrinsic polarizations of such graphs $\SG$ is finite. By \eqref{eq:degineq}, its maximum gives an upper bound for ${\mathcal H }$. 

Finally, let $h > 2d^2$ exceed the maximum of the set   
${\mathcal H }$ and let $(X,H)$ be  a quasi-polarized K3 surface  of degree $\tD$. Then we have
$$
r_0 + S_d = \# \Gfamma \leq 24
$$
and the proof of Theorem~\ref{thm}~(i) is complete. \hfill $\Box$

\begin{rem}  \label{rem:effective}
	One can check that some slight alterations of the arguments in \cite{RS-24} show that 
	the claim of  Theorem~\ref{thm}~(i) holds for $h > 42 d^2$. We decided not to pursue this issue here to maintain our exposition compact.  
\end{rem}

\subsection{Proof of Theorem~\ref{thm}~(ii)}

Let us fix a positive integer $d$.
By 
 Lemma~\ref{lem:hyper}
 and \eqref{eq-bezout-modified} there are finitely many hyperbolic graphs $\Gamma$ with at most $24$ vertices that are geometric in degree $\tD$ for $h > 2d^2$. Thus, by Corollary~\ref{cor:finite}, there are finitely many integers $h$ such that there exists a
hyperbolic graph $\Gamma$ with at most $24$ vertices that is geometric in degree $\tD$. Consequently
\eqref{eq:Hfinite} implies that the set 
	\begin{eqnarray*}
	{\mathcal H}_{hyp}&:=&\{h \in \NN_{\geq 2d^2} \mbox{ such that  there exists a hyperbolic graph }
	\Gamma \mbox{ with } \\
	& &  \mbox{ colors in $\{0, \ldots, d\}$}  \mbox{ that is geometric in degree $\tD $} 
	\} \, .
\end{eqnarray*}
is finite.
Let $h > 2d^2$ exceed its maximum 
and let $(X,H)$ be  a quasi-polarized K3 surface  of degree $\tD$ with $S_d > 21 - r_0$. By Lemma~\ref{lem:elliptic}, the graph $\Gfamma$ is parabolic. Now
 Theorem~\ref{thm}~(ii) follows directly from Lemma~\ref{lem:parabolic}. \hfill $\Box$

 \section{Proof of Theorem \ref{thm} (iii)}
 \label{s:pf-0}

We shall use elliptic fibrations with section.
Recall that, on a K3 surface $X$, such a fibration is encoded
in an embedding 
$$U\hookrightarrow \NS(X)$$
(thanks to Riemann--Roch),
and that the reducible fibres are encoded in Dynkin diagrams of type ADE 
which are perpendicular to $U$.
The following result will be instrumental,
especially when generalized to positive characteristic.

\begin{theo}
\label{thm:c_0}
Let $p=0$ or $p>3$ be a prime.
For any $c_0\in\ZZ_{\geq 2}$ and  $r\in\{1,\hdots,17\}$
there is a K3 surface $X$ over some field of characteristic $p$
such that
\begin{eqnarray}
\label{eq:NS}
\NS(X)\supseteq U \oplus A_r \oplus \langle -2c_0\rangle.
\end{eqnarray}
Over $\CC$, $X$ can be chosen in such a way that there is only one reducible fibre, namely of type $\IK_{r+1}$,
and all other singular fibres  of the fibration given by 
\eqref{eq:NS} have type $\IK_1$.
\end{theo}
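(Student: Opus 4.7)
The plan is to establish the complex case by combining Nikulin's lattice theory with the surjectivity of the period map, and then to deduce the positive-characteristic case by specialization.

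\textbf{Complex case via lattice theory and Torelli.} My first step is to verify that the lattice $L := U \oplus A_r \oplus \langle -2c_0\rangle$, of signature $(1,r+2)$ and rank $r+3 \leq 20$, admits a primitive embedding into the K3 lattice $\Lambda_{K3} = U^{\oplus 3} \oplus E_8^{\oplus 2}$: the orthogonal complement of such an $L$ has signature $(2,17-r)$, which is indefinite of positive rank, so Nikulin's criterion applies (concretely, send the $U$ summand to one copy of $U \subset \Lambda_{K3}$, and embed the negative-definite remainder $A_r\oplus \langle -2c_0\rangle$ of rank $\leq 18$ into $U^{\oplus 2}\oplus E_8^{\oplus 2}$). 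By the surjectivity of the period map, the corresponding $L$-polarized moduli space is non-empty of dimension $17-r$, and at its very general point $X$ one has $\NS(X)=L$.

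\textbf{Fibration and fibre analysis.} The $U$-summand of $\NS(X)$ gives, after a Weyl-group action by $(-2)$-reflections, a primitive nef isotropic class, hence an elliptic fibration $\pi : X \to \PP^1$ with section (Pjatetskii-Shapiro--Shafarevich). The Shioda--Tate formula identifies the root part of $U^{\perp}\cap \NS(X) = A_r \oplus \langle -2c_0\rangle$ with the sum of root lattices of the reducible fibres of $\pi$. Since $c_0 \geq 2$, the class $\langle -2c_0\rangle$ is not a root; hence the root part equals $A_r$, and connectedness of the Dynkin diagram $A_r$ forces a single reducible fibre, of Kodaira type $\IK_{r+1}$. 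The Shioda--Tate rank formula then gives $\rank \MW(\pi)=1$, and comparing discriminants (using primitivity of $U\oplus A_r$ in $L$) shows that $\MW(\pi)$ is torsion free with a generator of height $2c_0$. Finally, the total degree of the discriminant of $\pi$ is $12\chi(X) = 24$, of which $r+1$ is taken by the $\IK_{r+1}$; for very general $X$ in the $L$-polarized family, the residual $23-r$ units of discriminant consist of simple roots, so the remaining singular fibres are of type $\IK_1$.

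\textbf{Positive characteristic.} For $p > 3$ I would spread out the complex surface: its minimal Weierstrass model is defined over a number field $K$ and extends smoothly to $\mathrm{Spec}\,\mathcal{O}_K$ away from finitely many primes. Choosing any prime above $p$ of good reduction (the hypothesis $p \neq 2,3$ ensures tame reduction of the minimal Weierstrass form and preservation of the multiplicative Kodaira types $\IK_{r+1}$ and $\IK_1$) yields a K3 surface $\bar X$ in characteristic $p$; cycle specialization provides an injection $\NS(X)\hookrightarrow \NS(\bar X)$, so $\NS(\bar X)\supseteq L$.

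The hard step is the generic-fibre assertion in the middle paragraph: ruling out higher-order vanishing of the residual discriminant, or additive Kodaira fibres. I would handle this by a dimension count inside the $L$-polarized moduli (any such degeneration picks up an extra algebraic class and therefore lives on a strictly smaller stratum), or, more robustly, by writing down an explicit Weierstrass model realising the configuration, which would simultaneously yield a model defined over $\ZZ[1/6]$ and short-circuit the specialization argument.
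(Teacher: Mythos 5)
Your complex-case skeleton (Nikulin embedding of $L=U\oplus A_r\oplus\langle-2c_0\rangle$, surjectivity of the period map, and the Shioda--Tate/Nishiyama analysis showing that for $\NS(X)=L$ the reducible fibres account exactly for the root part $A_r$ and that the Mordell--Weil group is infinite cyclic with a generator of height $2c_0$) is sound and is a genuinely different route from the paper. But the step you yourself flag as hard is a real gap, and your first proposed fix does not work: a cuspidal fibre of type $\II$ among the irreducible singular fibres, and likewise the degenerations $\IK_2\rightsquigarrow\III$ and $\IK_3\rightsquigarrow\IV$ relevant for $r\le 2$, do \emph{not} pick up any extra algebraic class --- the N\'eron--Severi lattice is unchanged --- so these loci are invisible to the lattice/Noether--Lefschetz stratification and cannot be excluded by saying the degeneration lives on a smaller stratum. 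To rule them out one must actually exhibit a member with only multiplicative fibres, and this is precisely what the paper does: it produces a single terminal object, the singular K3 surface with transcendental lattice $\langle2\rangle\oplus\langle2c_0\rangle$, whose second jacobian fibration (via Nishiyama) has configuration exactly $\IK_{18}+6\times\IK_1$ because it arises by quadratic base change from the extremal rational elliptic surface with an $\IK_9$ fibre, all of whose fibres are multiplicative; the open conditions are then propagated to the strata with $r<17$ by a dimension count in the moduli of elliptic K3 surfaces. Your alternative of an explicit Weierstrass model over $\ZZ[1/6]$ is not carried out, and it is where all the work would lie. (A minor additional wrinkle: for $r=17$ the $L$-polarized family is $0$-dimensional, so ``very general'' gives nothing and $\NS$ may be a strict overlattice of $L$ --- indeed the paper's terminal object has an index-$3$ overlattice --- so the torsion-freeness of $\MW$ and the fibre type need a separate check there.)

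The positive-characteristic step also has a genuine gap: spreading out one complex member gives good reduction only away from finitely many \emph{unspecified} primes (depending on $r$ and $c_0$), whereas the theorem requires every $p>3$. The paper circumvents this exactly because its chosen $X$ is a singular (Shioda--Inose/CM) K3 surface, to which Matsumoto's potentially good reduction theorem applies at \emph{every} prime after a finite extension, yielding the specialization embedding $\NS(X)\hookrightarrow\NS(X_\p)$ for all $p>3$. Without such an input (or an explicit integral model with controlled reduction), your argument only proves the characteristic-$p$ statement for all but finitely many $p$. Note also that the paper deliberately restricts the fibre-configuration claim to $\CC$: keeping the fibration non-degenerate after reduction is precisely the delicate issue it treats later via the Legendre symbol condition and Kaneko's supersingularity bound, so your parenthetical hope that $p\neq2,3$ alone preserves the Kodaira types is unfounded.
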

We postpone the proof of Theorem~\ref{thm:c_0} for \S \ref{ss:pf-theo-c_0} and continue by first drawing the desired conclusion
for Theorem \ref{thm}. The necessary generalizations of the second statement to fields of positive characteristic
will be subject of Section \ref{s:pf-p}.

\subsection{Proof of Theorem \ref{thm} (iii)}
\label{ss:pf}

Let $X$ be as in Theorem \ref{thm:c_0}.
Then $X$ admits an elliptic fibration with zero section $O$, as seen above.
By \cite{MWL} and the assumption on the singular fibres, there is a section $P$ of height $2c_0$.
Since the decomposition \eqref{eq:NS} is valid over $\ZZ$,
the section $P$ meets the identity component $\Theta_0$ of the $\IK_{r+1}$ fibre,
i.e.\ the same component as $O$.
In other words, the vector $v$ of square $(-2c_0)$ from \eqref{eq:NS} is given
by the orthogonal projection 
$$\varphi: \NS(X) \to U^\perp$$
applied to $P$:
\begin{eqnarray}
\label{eq:v}
v = \varphi(P) = P - O - c_0 F
\end{eqnarray}
where, following \cite{MWL}, $c_0 = (P.O)+2$ and $F$ denotes the general fiber.

\begin{claim}
\label{claim}
If $d\geq 3$, the vector $H = NF+dO+v$ is quasi-ample and non-hyperelliptic for all $N>\max(2d, c_0+1)$.
\end{claim}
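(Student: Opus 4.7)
The plan is to apply Criterion~\ref{crit}. The key observation is the identity
\[
H \;=\; (N-c_0)\,F + (d-1)\,O + P,
\]
obtained by substituting $v = P - O - c_0 F$; both coefficients $N-c_0$ and $d-1$ are at least $2$ under the hypotheses, so $H$ is displayed as a positive integral combination of effective irreducible classes.

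From $F^2=0$, $O^2=-2$, $F.O=1$, $v^2=-2c_0$ and $v\perp U$ one computes $H^2 = 2(Nd - d^2 - c_0)$; a short case split on whether $N\geq 2d+1$ or $N\geq c_0+2$ is the binding bound yields $H^2\geq 14$, handling the degree bound $H^2\geq 4$ in Criterion~\ref{crit}~(2) and ruling out the $H^2 = 8$ subcase. For nefness, any irreducible curve $C$ not among $F$, $O$, $P$, $\Theta_0,\ldots,\Theta_r$ satisfies $F.C,\, O.C,\, P.C \geq 0$, whence $H.C\geq 0$ term by term from the identity above; the remaining checks are direct: $H.F = d$, $H.O = N-2d$, $H.P = N-2d+c_0(d-2)$ are all positive, while $H.\Theta_0 = d$ and $H.\Theta_i = 0$ for $i\geq 1$.

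The crux is verifying $H.E > 2$ for every irreducible curve $E$ of arithmetic genus one (so $E^2=0$). Writing $[E] = aF + bO + w$ with $w\in A_r \oplus \langle -2c_0\rangle$ and $b=F.E\geq 0$, I split into cases. If $b=0$, negative-definiteness of the orthogonal summand forces $w=0$, so $[E]=[F]$ and $H.E = d\geq 3$. If $b\geq 2$, then $H.E \geq (N-c_0)b \geq 4$. If $b=1$, then $w^2 = 2(1-a)$, and $E.O = a-2 \geq 0$ by irreducibility ($E\neq O$) gives $a\geq 2$. The tight subcase $a=2$ forces $w^2=-2$; since $c_0\geq 2$ excludes roots from $\langle -2c_0\rangle$, $w$ must be an $A_r$-root, whence $E.O=0$, $P.E=c_0$, and $H.E = N > 2$. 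For $a\geq 3$ one has $E.O \geq 1$ and $P.E \geq 0$ (by irreducibility, $E\neq P$), so $H.E \geq (N-c_0)+(d-1) \geq 4$.

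The main obstacle is the tight subcase $b=1$, $a=2$, where the bound $H.E = N > 2$ rests simultaneously on $N > c_0+1$ and on $c_0\geq 2$, the latter being exactly what excludes roots in $\langle -2c_0\rangle$. A tacit hypothesis throughout is that $\NS(X)$ equals (rather than merely contains) $U\oplus A_r\oplus\langle -2c_0\rangle$; this is achieved for the generic K3 surface provided by Theorem~\ref{thm:c_0}, which suffices for the application to Theorem~\ref{thm}~(iii).
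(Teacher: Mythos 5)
Your computations (the rewriting $H=(N-c_0)F+(d-1)O+P$, the value of $H^2$, the nefness check) match the paper's, but your verification of condition (1) of Criterion~\ref{crit} is lattice-theoretic and rests on the tacit hypothesis that $\NS(X)$ \emph{equals} $U\oplus A_r\oplus\langle -2c_0\rangle$. That hypothesis is not supplied by Theorem~\ref{thm:c_0}, which only asserts a containment \eqref{eq:NS}, and it is actually false for the surfaces the paper works with: for $r=17$ over $\CC$ the surface is the singular K3 with $\NS(X)\cong U\oplus (A_{17})'\oplus\langle -2c_0\rangle$ as in \eqref{eq:A_17'} (an index-3 overlattice, rank $20$), the stratum $\mathcal M_{17}$ is zero-dimensional so there is no ``generic'' member to fall back on, and for $r<17$ the generic member of a component of $\mathcal M_r$ is only known to contain the lattice, possibly with finite index (the paper explicitly tolerates non-primitive embeddings). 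Moreover Claim~\ref{claim} is re-used in \S\ref{ss:iv}--\ref{ss:v} for the reductions $X_\p$, where $\rho=20$ or $22$ and the orthogonal complement of $U$ in $\NS$ contains many classes outside $A_r\oplus\langle-2c_0\rangle$. Without the exact-lattice assumption your tight case breaks down: for $F.E=1$, $E.O=0$ you can only say $P.E\geq 0$, hence $H.E\geq N-c_0\geq 2$, which does not give the strict inequality $H.E>2$ required by Criterion~\ref{crit}(1).

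The fix -- and the paper's actual route -- is geometric and needs no control of $\NS$: an irreducible curve $E$ with $F.E=1$ is a section of the fibration, hence a smooth rational curve, so it can never have arithmetic genus $1$; more generally, any irreducible curve that is neither a fibre component nor smooth rational is a multisection of index $F.E\geq 2$, whence $H.E\geq 2(N-c_0)\geq 4$, while genus-one fibre components (the fibres themselves) have $H.E=d\geq 3$. This disposes of condition (1) uniformly for every $X$ as in Theorem~\ref{thm:c_0}, in any characteristic. Your case analysis is fine as far as it goes, but as written it proves the Claim only under an extra assumption that the construction does not (and in the extreme case $r=17$ cannot) provide, so the argument has a genuine gap exactly in the case $F.E=1$ that your lattice hypothesis was invoked to handle.
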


\begin{proof}[Proof of Claim \ref{claim}]
By Criterion \ref{crit}, it suffices to check that $H$ is nef and 
that conditions (i) and (ii) from Criterion \ref{crit} hold true.
Clearly we have
\[
H^2 = 2d(N-d) -2c_0. 
\]
Spelling out the definition of $H$ using \eqref{eq:v}, we have
\[
H = (N-c_0) F + (d-1) O + P.
\]
We thus find that
\begin{itemize}
\item
$H.O = N-2d$,
\item
$H.P = N-c_0 + (d-1)(c_0-2)-2 = N -2c_0 +(c_0-2)d$, 
\item
$H.F = H.\Theta_0 = d \geq 3$;
\item
$H.\Theta_i=0$ for all non-identity components $\Theta_1,\hdots,\Theta_r$ of the $\IK_{r+1}$ fibre;
\item
$H.C \geq (N-c_0)F.C \geq N-c_0$
for any  rational curve $C\neq O,P$
not contained in any fibre (since $C$ forms a multisection);
\item
$H.C \geq (N-c_0)F.C \geq 2(N-c_0)$
for any curve $C$ which is neither smooth rational 
nor contained in any fibre (since $C$ gives a multisection of index strictly greater than one).
\end{itemize}
Taken together, $H$ fulfills Criterion \ref{crit} as soon as 
$N>\max(2d, c_0+1)$.
\end{proof}

We return to the proof of Theorem \ref{thm} (iii).
With $H$ quasi-ample and non-hyperelliptic, it remains to verify
that $r_0 =  r$ -- accounting for the fibre components $\Theta_1,\hdots,\Theta_r$
not intersecting $O$ and $P$ -- 
and that $r_d\geq (24-r)$ -- accounting for $\Theta_0$ and for the $(23-r)$ fibres of type $\IK_1$
(which occur by inspection of the Euler--Poincar\'e characteristic
combined with the generality statement of Theorem \ref{thm:c_0}).

Finally, varying $N>2d$, it suffices to consider a full set of representatives $c_0\in\{2,\hdots,d+1\}$ of $\ZZ/d\ZZ$
to cover all values of $H^2=2h$ starting from $h=(d+1)(d-1)$.
This proves Theorem \ref{thm} (iii).
\qed

\begin{rem}
In \S \ref{ss:iv}, \S \ref{ss:v}, we will adjust the argument to also cover other representatives $c_0'$ 
of $\ZZ/d\ZZ$. 
\end{rem}

\subsection{Warm-up for the proof of Theorem \ref{thm:c_0}}
\label{ss:warm-up}

Consider the case without a section, i.e.\ where 
\begin{eqnarray}
\label{eq:NS'}
\NS(X)\supseteq U \oplus A_r.
\end{eqnarray}
To see that such K3 surfaces form $(18-r)$-dimensional families in any characteristic,
we can argue with the discriminant $\Delta$ of a minimal Weierstrass model.
Locating the special fibre at $t=0$, say,
a fibre of type $\IK_{r+1}$ is given by the codimension $r$ condition
that $v(\Delta)=r+1$, together with the open condition that the fibre is multiplicative, i.e.\ $v(j)=-(r+1)$
for the $j$-invariant (whose numerator thus does not vanish at $t=0$).
In addition, the open condition that $\Delta$ have no other multiple roots
assures that all other singular fibres have type $\IK_1$ as required.

Consider the stratification of the 18-dimensional moduli space of elliptic K3 surfaces with section
into strata $\mathcal P_r$ comprising elliptic K3 surfaces satisfying \eqref{eq:NS'}
(without the extra open conditions on the singular fibres).
It follows that each component of $\mathcal P_r$
\begin{enumerate}
\item
 is either empty or 
 \item
 is non-empty of codimension one
in some component of $\mathcal P_{r-1}$ or, a priori, 
\item
equals some component of $\mathcal P_{r-1}$.
\end{enumerate}
Arguing as in \cite{Artin} or \cite[\S 8.9]{SS-MWL},
we can verify the second alternative for some sequence of components of strata $\mathcal P_i$ up to $\mathcal P_r$
by  verifying the expected dimension of some component $Z$ of $\mathcal P_r$.
Moreover, if the open conditions hold true for some element in $Z$,
then so they do for a general element of the corresponding component of $\mathcal P_i \, (i<r)$.

Presently, start with the extremal rational elliptic surface $S$ with a fibre of type $\IK_9$
(and with $\MW = \ZZ/3\ZZ$, see \cite{Beauville}).
Outside characteristic $3$, this has three further singular fibres, each of type $\IK_1$.
Applying a quadratic base change ramified at the $\IK_9$ fibre,
we obtain a one-dimensional family of K3 surfaces with
$\NS\supset U + A_{17}$, thus exhibiting a component of $\mathcal P_{17}$ of the expected dimension.
Here the embedding is not primitive (due to the torsion section),
but this does not harm the validity of the conclusion that each intermediate stratum $\mathcal P_r \,(r<17)$
has a component of the expected dimension.

\subsection{Strategy of the proof of Theorem \ref{thm:c_0}}
\label{ss:pf-theo-c_0}
\label{ss:strategy}

We now enhance the above discussion by considering the full desired N\'eron--Severi lattice from \eqref{eq:NS}
(possibly up to finite index).
That is, in addition to the singular fibre of type $\IK_{r+1}$ as in \S \ref{ss:warm-up},
we ask for a section $P$ of height $2c_0$.
In practice, 
it is convenient to locate the node of the $\IK_{r+1}$ fibre in a Weierstrass model at $(0,0)$.
Then the coordinates of  $P$ can be expressed by rational functions
\[
P = \left(\dfrac{U}{W^2}, \dfrac{V}{W^3}\right) \;\;\; \text{ where } \;\;\; U,V,W \in k[t]
\]
of degree $2c_0, 3c_0$ resp.\ $(c_0-2)$ such that $(U,W) = (V,W) = 1$ and $t\nmid U$.
Spelling this out against the minimal Weierstrass form, we obtain again a codimension one condition.

Letting the strata $\mathcal M_r$ correspond to the elliptic K3 surfaces satisfying \eqref{eq:NS},
we are in a situation completely analogous to 
\S 
\ref{ss:warm-up},
except that the expected dimension of $\mathcal M_r$ is now $(17-r)$ due to the additional section.
In consequence, Theorem \ref{thm:c_0} can be proved by exhibiting a single K3 surface $X$ 
forming an isolated component of  $\mathcal M_{17}$ (satisfying the given open conditions).

\subsection{Proof of Theorem \ref{thm:c_0} in characteristic zero}
\label{sss}
\label{ss:CC}

Over $\CC$, this is readily achieved:
simply consider the singular K3 surface $X$ with transcendental lattice 
$$
T(X) \cong \langle 2\rangle\oplus \langle 2c_0\rangle.
$$
This exists by \cite{SI},
and it can be regarded as a consequence of the construction in loc.\ cit.\ that
\begin{eqnarray}
\label{eq:fibr'}
\NS(X) \cong U \oplus E_8^2 \oplus A_1 \oplus\langle -2c_0\rangle,
\end{eqnarray}
i.e.\ $X$ admits a jacobian elliptic fibration with two fibres of type $\II^*$, one fibre of type $\IK_2$ and  a section of height $2c_0$.
But then, as exploited in \cite{Nishi}, $X$ admits another jacobian elliptic fibration with a fibre of type $I_{18}$, $3$-torsion in 
the Mordell--Weil group and still a section of height $2c_0$:
\begin{eqnarray}
\label{eq:A_17'}
\NS(X) \cong U \oplus (A_{17})' \oplus\langle -2c_0\rangle;
\end{eqnarray}
here $A_{17}'$ indicates the index 3 overlattice obtained by adding a vector $w\in A_{17}^\vee$ of square $w^2=-4$
(corresponding to the $3$-torsion section).
To verify that $X$ forms the required terminal object,
it remains to check that the configuration of singular fibres is exactly $1 \times \IK_{18} + 6 \times \IK_1$.
Presently the fibre of type $\IK_{18}$ is encoded in \eqref{eq:A_17'}, and together with the $3$-torsion section,
it implies that $X$ arises from the extremal rational elliptic surface $S$ from \S \ref{ss:warm-up}
by quadratic base change. But then the other singular fibres can only be multiplicative, of type $\IK_1$ or $\IK_2$.
Since the latter type does not feature as an orthogonal summand in \eqref{eq:A_17'}, it is excluded.
This completes the proof of Theorem \ref{thm:c_0} over $\CC$.
\qed

\subsection{Proof of Theorem \ref{thm:c_0}  in positive characteristic}

It follows from the Shioda--Inose structure introduced in \cite{SI}
that the singular K3 surface $X$ from \S \ref{sss} is defined over some number field $K$
(cf.\ also \cite{S-fields}).
Hence we can consider the reduction $X_\p$ modulo any prime $\p\subset \mathcal O_K$ dividing $p$.
In fact, by \cite[Cor.\ 0.5]{Matsumoto},
$X$ has potentially good reduction, so this is well-defined upon enlarging $K$,
and we obtain the required embedding
\begin{eqnarray}
\label{eq:spec}
\NS(X) \hookrightarrow \NS(X_\p).
\end{eqnarray}
This completes the proof of Theorem \ref{thm:c_0}
 over fields of positive characteristic.
 \qed

\section{Proof of Theorem \ref{thm} (iv), (v)}
\label{s:pf-p}

In the above set-up,
it  follows from \cite{Shimada-T} that $X_\p$ is supersingular unless
the Legendre symbol satisfies
\begin{eqnarray}
\label{eq:Legendre}
\left(\frac{-c_0}{p}\right) = 1, \;\;\;
\text{ i.e.\   $-c_0$ is a non-zero square mod $p$.}
\end{eqnarray}

\begin{lemm}
\label{lem:deform}
$X_\p$ does not deform together with the sublattice $L=\NS(X)$ of $\NS(X_\p)$ given by \eqref{eq:spec}.
\end{lemm}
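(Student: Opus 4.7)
The plan is to argue by contradiction. Suppose $X_\p$ admits a nontrivial formal deformation over $W(\bar{\FF}_p)$ along which all classes of $L$ extend. One then passes to a one-parameter formal family $\mathcal{X} \to \mathrm{Spf}(R)$, where $R$ is a complete local $W(\bar{\FF}_p)$-algebra of positive relative Krull dimension. Since $L$ contains the polarization class $H$ inherited from $X$, the formal family admits algebraization by Grothendieck's existence theorem, yielding an actual flat family $\mathcal{X} \to \mathrm{Spec}(R)$ whose generic fiber $\mathcal{X}_\eta$ is a K3 surface in characteristic zero with $L \hookrightarrow \NS(\mathcal{X}_\eta)$.

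Next, since the Picard rank of any K3 surface in characteristic zero is at most $20$ and $\rank L = 20$, one obtains $\rho(\mathcal{X}_\eta) = 20$ and $\NS(\mathcal{X}_\eta)$ agrees with $L$ up to finite index. Hence $\mathcal{X}_\eta$ is a singular K3 surface in the sense of Shioda--Inose. By the classification of such surfaces via their transcendental lattices \cite{SI}, the set of isomorphism classes of K3 surfaces over $\overline{\QQ}$ whose N\'eron--Severi contains $L$ with bounded index is \emph{finite}, given by an appropriate class number. A positive-dimensional formal deformation would therefore have to produce infinitely many non-isomorphic generic fibers inside a finite set of isomorphism classes, forcing the family to be isotrivial. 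Since polarized singular K3 surfaces have finite automorphism group, an isotrivial deformation is trivial after finite \'etale base change, contradicting the assumed non-triviality.

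The main technical obstacle lies in the algebraization and isotriviality steps, which require careful handling of formal versus algebraic deformations. An alternative plan that sidesteps this works at the level of first-order deformations: the tangent space to $\mathrm{Def}(X_\p, L)$ at the closed point identifies, via Serre duality, with the orthogonal complement of the image of the Chern class map
$$ c_1 : L \otimes \bar{\FF}_p \longrightarrow H^1(X_\p, \Omega^1_{X_\p}). $$
The lemma then reduces to showing this image has full dimension $20$. In the non-supersingular case this is immediate from $\rho(X_\p) = \rank L = 20$; in the supersingular case ($\rho(X_\p) = 22$) the hard part will be to verify that $L \otimes \bar{\FF}_p$ avoids the two-dimensional kernel of $c_1$ on $\NS(X_\p) \otimes \bar{\FF}_p$, which can be checked using the explicit structure of $L$ arising from the Shioda--Inose construction of $X$ in \S\ref{ss:CC}.
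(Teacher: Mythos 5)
There is a genuine gap, and it sits exactly where the paper's proof does its work. Your first plan addresses the wrong deformation problem: the lemma concerns \emph{equicharacteristic} deformations of $X_\p$ inside the moduli strata in characteristic $p$ (this is how it is used for $\mathcal M_{17}$), not deformations over $W(\bar\FF_p)$. If you deform over $\bar\FF_p[[t]]$, the generic fibre is again a K3 surface in characteristic $p$, where the Picard number may be $22$; so the steps ``$\rho\leq 20$ in characteristic zero'', the Shioda--Inose class-number finiteness, and the isotriviality conclusion simply do not apply. In other words, your argument could at best rule out certain mixed-characteristic families, and it collapses precisely in the supersingular case, which is the delicate one here (note \eqref{eq:Legendre}: for many $c_0$ and $p$ the reduction \emph{is} supersingular).

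Your alternative, tangent-space plan is closer in spirit to a correct argument but leaves the two essential points unproved. In the finite-height case, injectivity of $c_1$ on $L\otimes\bar\FF_p$ is not ``immediate from $\rho=20$''; it is essentially the content of the results of Deligne and Lieblich--Maulik that the paper invokes for this case. In the supersingular case you assert a ``two-dimensional kernel'' of $c_1$ on $\NS(X_\p)\otimes\bar\FF_p$ and defer the verification that $L\otimes\bar\FF_p$ avoids it; but the size of that kernel is governed by the Artin invariant $\sigma_0$, and bounding $\sigma_0$ is exactly the missing key step. The paper settles it lattice-theoretically: $L^\vee/L\cong(\ZZ/2\ZZ)\times(\ZZ/2c_0\ZZ)$ has $p$-length at most $1$, the rank-two orthogonal complement of $L$ in $\NS(X_\p)$ contributes at most $2$, so the $p$-length $2\sigma_0$ of $\NS(X_\p)^\vee/\NS(X_\p)$ is at most $3$, forcing $\sigma_0=1$; by Artin's stratification the corresponding supersingular stratum has dimension $\sigma_0-1=0$, so $X_\p$ is isolated with $L$. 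Without an argument of this kind (or some substitute pinning down $\sigma_0$, respectively the image of $c_1$), your proposal does not establish the lemma.
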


\begin{proof}
To verify that $X_\p$ does not deform,
consider the rank 20 sublattice $L\subset \NS(X_\p)$ given by \eqref{eq:spec}.
Either $X_\p$ has finite height, i.e.\ $\rho(X_\p)=20$,
then there are no projective deformations following work of Deligne \cite{Deligne} (cf.\ also \cite{LM}),
or $X_\p$ is supersingular, i.e.\ $\rho(X_\p)=22$.
In the latter case, the moduli space of supersingular 
K3 surfaces is stratified in terms of the Artin invariant 
$\sigma_0\in\{1,\hdots, 10\}$, with each stratum of dimension $\sigma_0-1$ by \cite{Artin}.
But then $\sigma_0$ is defined by the condition that
\[
\NS(X_\p)^\vee/\NS(X_\p) \cong (\ZZ/p\ZZ)^{2\sigma_0}.
\]
Presently we have the rank 20 sublattice $L\cong\NS(X)$ via \eqref{eq:spec};
since 
$$L^\vee/L \cong \NS(X)^\vee/\NS(X)\cong (\ZZ/2\ZZ) \times (\ZZ/2c_0\ZZ),$$
this has $p$-length $0$ or $1$,
so taken together with its rank two orthogonal complement,
we cannot get $p$-length greater than  3 (for a detailed argument, see \cite[Thm.\ 6.1]{KS}). Hence $\sigma_0=1$, and $X_\p$ is isolated in moduli, 
given the rank 20 sublattice $L$ embedded via \eqref{eq:spec}.
\end{proof}

We emphasize that the embedding \eqref{eq:spec} need not be primitive if $K\neq \QQ$
(cf.\ \cite[Ex.\ 3.12]{Maulik-Poonen} and \cite[Thm.\ 4.1.2.1]{Raynaud}).
This is the main subtlety why Lemma \ref{lem:deform} does not  directly imply the next lemma, and thus the  analogue of the second claim of Theorem \ref{thm:c_0} in positive characteristic. 

\begin{lemm}
\label{lem:isol}
For $c_0'\in\ZZ_{>1}$ as detailed in the proof, 
$X_\p\in \mathcal M_{17}$ satisfies the generality assumptions on the singular fibres.
\end{lemm}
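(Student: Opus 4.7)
The plan is to control the fibre configuration of $X_\p$ by ensuring the reduction has finite height (i.e.\ is not supersingular), then to invoke Shioda--Tate to pin down the reducible-fibre structure, and finally to use the $3$-torsion inherited from the underlying extremal rational elliptic surface to rule out additive irreducible fibres; the freedom to replace $c_0$ by another representative $c_0' \in \ZZ_{>1}$ of the same residue class modulo $d$ is precisely what is needed to arrange the Legendre condition \eqref{eq:Legendre}. Throughout, after choosing $c_0'$, I work with the Shioda--Inose surface $X'$ built from $c_0'$ as in \S \ref{ss:CC} and with its reduction $X'_\p$ provided by \cite{Matsumoto}, identifying $X'_\p$ with the $X_\p$ of the lemma.

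Assume first that $\rho(X'_\p) = 20$. The components of the $\IK_{18}$ fibre of $X'$ specialize to effective classes of smooth rational curves on $X'_\p$, so an $\IK_{18}$ fibre persists. The Shioda--Tate formula
\[
\rho(X'_\p) \, = \, 2 + \rank \MW(X'_\p) + \sum_{v}(m_v - 1),
\]
combined with $\rank \MW(X'_\p) \geq 1$ coming from the specialization of the section $P$ of height $2c_0'$, forces equality throughout: $\rank \MW(X'_\p) = 1$ and $\IK_{18}$ is the only reducible fibre. The remaining Euler number $24 - 18 = 6$ is therefore distributed over irreducible singular fibres, which for $p > 3$ are of Kodaira type $\IK_1$ or $\II$. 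But $X'$ carries a $3$-torsion section $T$ inherited via the quadratic base change of \S \ref{ss:CC} from the extremal rational elliptic surface with $\IK_9$ fibre, and this $3$-torsion section specializes to $X'_\p$ (as $p > 3$). Since the smooth locus of a $\II$ fibre is the additive group $\mathbb{G}_a$, which has no $3$-torsion in characteristic $p \neq 3$, such a fibre would force $T_\p$ and the zero section $O$ to meet along it, contradicting the specialization-invariance of $T \cdot O$ (which vanishes in the complex case, by the height formula applied to the $3$-torsion section meeting a specific component of the $\IK_{18}$ fibre); hence all six remaining fibres are of type $\IK_1$, yielding exactly the configuration $1 \times \IK_{18} + 6 \times \IK_1$.

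It remains to choose $c_0' \equiv c_0 \pmod d$ with $c_0' \geq 2$ and $(-c_0'/p) = 1$, so that \cite{Shimada-T} combined with \eqref{eq:Legendre} forces $\rho(X'_\p) = 20$. In case (iv), $p \equiv 1 \pmod 4$ gives $(-1/p) = 1$, reducing the search to a square modulo $p$ in the progression $\{c_0, c_0 + d, c_0 + 2d, \ldots\}$; if $c_0$ itself is not a square, multiplying by the least prime $p_0 \nmid 2d$ which happens to be a non-residue modulo $p$ toggles quadratic residuacity while bounding $c_0'$ by roughly $(p_0+1)d/2$. Substituting this into $h = d(N-d) - c_0'$ with $N$ minimal as in Claim~\ref{claim} yields the bound $h \geq d((p_0+1)d + 1 - p_0)/2$ of Theorem~\ref{thm}~(iv). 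In case (v), $p \equiv 3 \pmod 4$ and one seeks a non-residue modulo $p$: the first bullet follows by the analogous argument with $p$ replacing $p_0$ (provided $p \nmid d$, so that $d$ is invertible modulo $p$), while for $p > 4\sqrt{d+1}$ a Burgess/P\'olya--Vinogradov-type bound on the least non-residue in an arithmetic progression supplies $c_0' \in \{2,\ldots, d+1\}$ directly, yielding the stronger bound $h \geq d^2 - 1$.

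The main obstacle will be this final, number-theoretic step of optimizing $c_0'$ under both the congruence and Legendre-symbol constraints; it is also what accounts for the additional hypotheses in part (v), where the Dirichlet-type argument of the first bullet breaks down when $p \mid d$ and the second bullet compensates by invoking a substantially stronger character-sum estimate available under the hypothesis $p > 4\sqrt{d+1}$.
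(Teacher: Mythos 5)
Your proposal diverges from the paper at the two places where the real difficulties sit, and both divergences are genuine gaps. First, in the finite-height case your Shioda--Tate count rests on the assertion that $\rank\MW(X_\p)\geq 1$ ``comes from the specialization of the section $P$''. Specialization only gives an injection of N\'eron--Severi lattices preserving intersection numbers; the class of $P$ survives, but it may become the class of a torsion section plus vertical divisors if the trivial lattice of the reduced fibration grows, in which case $\MW(X_\p)$ has rank $0$ and the equality $20=2+1+17$ never gets off the ground. This is precisely the degeneration at stake: $U\oplus E_8^2\oplus A_1\oplus\langle-2c_0\rangle$ admits a finite-index embedding into $U\oplus E_8^2\oplus A_1^2$ exactly when $c_0$ is a perfect square, and it is this condition (not the Legendre symbol) that forces the paper to replace $c_0$ by $c_0'$ when $p\equiv 1\bmod 4$ and $(-c_0/p)=1$; since $c_0'$ only needs to be a non-square in $\ZZ$, the replacement can be bounded via the residues mod $p_0$, independently of $p$ --- which is what makes the bound in Theorem (iv) possible.

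Second, your arithmetic step --- always arranging $(-c_0'/p)=1$ --- cannot be carried out within the stated bounds. The least quadratic (non)residue mod $p$ in the progression $c_0+d\ZZ$ is not bounded by a function of $d$ and $p_0$ alone; Burgess/P\'olya--Vinogradov require ranges whose length is a power of $p$, far exceeding $d+1$ in the regime $p>4\sqrt{d+1}$, and certainly do not place $c_0'$ in $\{2,\ldots,d+1\}$. Multiplying $c_0$ by a prime $p_0\nmid 2d$ leaves the residue class mod $d$, and there is no reason such a $p_0$ is a non-residue mod $p$. Worse, when $p\mid d$ (allowed in the second bullet of Theorem (v)) the symbol $(-c_0'/p)$ is constant along the progression, so finite height can simply not be arranged, and your strategy fails in principle. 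The paper does not avoid the supersingular case: for $p\equiv 1\bmod 4$ it is harmless because the curve with $j=12^3$ is ordinary, so $j(E_0)\not\equiv 12^3\bmod\p$; for $p\equiv 3\bmod 4$ it either adjusts $c_0$ modulo $p$ (only when $p\nmid d$) or invokes Kaneko's Gross--Zagier-type bound, $j(E_0)\not\equiv 12^3\bmod\p$ for $p>4\sqrt{c_0}$ --- the ingredient your proposal is missing. (Your use of the $3$-torsion section to exclude additive fibres of type $\II$ is fine, though the paper obtains multiplicativity of all fibres directly from the quadratic-base-change structure over the extremal rational elliptic surface.)
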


\begin{proof}
After Lemma \ref{lem:deform}, it remains and suffices to prove that the elliptic fibration $\pi$ on $X$ encoded in \eqref{eq:NS}
does not degenerate on $X_\p$.
As we have seen in \S \ref{sss}, the only possible degeneration as a K3 surface
consists in attaining a fibre of type $\IK_2$.
In fact, once this is excluded, there surely is a section of height $2c_0$ on $X_\p$,
and Lemma \ref{lem:isol} follows.

To understand the degeneration, we switch to the fibration $\pi'$ from \eqref{eq:fibr'},
or more precisely, to its general form
with fibres of type $\II^*$ twice and $\IK_2$ once.
This one-dimensional family arises from the product $E\times E$ 
of an elliptic curve with itself by way of a Shioda--Inose structure.
Independent of the characteristic $p>3$,
the fibration $\pi'$ degenerates to attain two $\IK_2$ fibres exactly when $j(E)=12^3$,
and to be isotrivial with a $\IV$ fibre exactly when $j(E)=0$. 
The latter case, however, does not cause the fibration $\pi$ to degenerate
as it results in a section of height $3/2$, so we shall concentrate on the first case.

Presently, $X$ arises from the elliptic curve $E_0$ with period $\sqrt{-c_0}$,
which has CM by the order $\ZZ[\sqrt{-c_0}]$ by \cite{SI}.
We are thus led to check whether, at some or all primes $\p$ of $K$ dividing $p$,
$j(E) \equiv 12^3 \mod \p$.
We distinguish the relevant cases and denote the elliptic curve with j-invariant $12^3$ by $E'$.

\subsection{$p\equiv 1\mod 4, (-c_0/p)=0$ or $-1$}

This case is very easy: since $E'$ is ordinary, but $X_\p$ is supersingular by \eqref{eq:Legendre},
we cannot have $j(E) \equiv 12^3 \mod \p$.
Hence the fibres do not degenerate, as desired.
\qed

\subsection{$p\equiv 1\mod 4, (-c_0/p)=1$}
\label{sss4}

In this case, $\rho(X_\p)=20$ by \eqref{eq:Legendre}, so the inclusion \eqref{eq:spec} is of finite index.
Here we can only have $\NS(X_\p) \cong U \oplus E_8^2 \oplus A_1^2$
when $c_0=N^2$ is a perfect square.
But then we replace $c_0$ by $c_0+d, c_0+2d, \hdots$ until the resulting number $c_0'$ is not a perfect square,
and adjust the arguments to accommodate $c_0'$ instead of $c_0$.
To bound $c_0'$, we let
\[
p_0 = \min\{q \text{ prime}; \; q\nmid 2d\}.
\]

\begin{claim}
\label{claim2}
$c_0'\leq c_0+\frac{p_0+1}2 d$.
\end{claim}

\begin{proof}[Proof of Claim \ref{claim2}]
By choice of $p_0$, the elements $c_0, c_0+d,\hdots$ cover all of $\FF_{p_0}$.
Since this field contains exactly $(p_0+1)/2$ squares, 
at least one of the $c_0+jd$ for $j=1,\hdots,(p_0+1)/2$ is no square in $\FF_{p_0}$,
so it's neither a perfect square in $\ZZ$.
\end{proof}

\subsection{$p\equiv 3 \mod 4, (-c_0/p)=1$}
\label{ss:31}

Another easy case as $\rho(X_\p)=20$ by \eqref{eq:Legendre}, 
but $c_0$ cannot be a perfect square, since otherwise $(-c_0/p)=0$ or $-1$.
Hence the finite index inclusion  \eqref{eq:spec} shows that the fibration $\pi$ does not degenerate.
\qed

\subsection{$p\equiv 3 \mod 4, (-c_0/p)=0$ or $-1$}
\label{sss5}

In this case, both $E'$ and $X_\p$ are supersingular,
so there is room for degenerating the fibration $\pi$.

If $p\nmid d$, then we can pursue an approach similar to \S \ref{sss4}
and add multiples of $d$ to $c_0$ until we hit a non-square $c_0' = c_0+jd$ modulo $p$.
Claim \ref{claim2} holds to bound $c_0'$ with $p_0$ replaced by $p$.

If $p\mid d$, and also in general without considering the divisibility of $d$ and also the issue of supersingularity, 
we can appeal to a generalization of the Gross-Zagier formula from \cite[Thm. 3]{Kaneko}
stating that $j(E_0)\not\equiv 12^3 \mod \p$ as soon as $p>4\sqrt{c_0}$.
As a consequence, the fibration $\pi$ cannot degenerate if $p>4\sqrt{d+1}$.
\end{proof}

\subsection{Proof of Theorem \ref{thm} (iv)}
\label{ss:iv}

We adjust the arguments from \S \ref{ss:pf} to cover the auxiliary values $c_0'\leq \frac{p_0+3}2d+1$ from \S \ref{sss4}.
This simply amounts to requiring $N>\frac{p_0+3}2d+2$ and covers all 
$h\geq  d((p_0+1)d+1-p_0)/2$
as stated.
\qed

\subsection{Proof of Theorem \ref{thm} (v)}
\label{ss:v}

If $p\nmid d$, then \S \ref{sss5} shows that the argument and the bound from \S \ref{ss:iv} carry over with $p_0$ replaced by $p$.

Generally, if $p>4\sqrt{d+1}$, the fibration $\pi$ cannot degenerate in characteristic $p$ by \S \ref{sss5},
so the bound $h\geq d^2-1$ carries over from \S \ref{ss:pf}.
\qed

\section{Improved bounds}
\label{s:specific}

The results, especially the bounds for $h$ in Theorem \ref{thm} are general and thus need not be optimal.
Here we indicate how to improve on them in specific cases.

\begin{prop}
\label{prop:small_d}
If $d\leq 20$ and the prime $p>3$ satisfies $p\neq 7$,
then the bounds for $h$ from the complex case Theorem \ref{thm} (iii) are also in effect in characteristic $p$.
\end{prop}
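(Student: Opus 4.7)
The plan is to revisit the degeneration analysis of Lemma~\ref{lem:isol} and to verify, over the restricted range $d \leq 20$, that for every prime $p > 3$ with $p \neq 7$ the K3 surface $X$ arising in \S\ref{ss:pf} admits good reduction at some $\mathfrak p \mid p$ without any degeneration of the elliptic fibration $\pi$. Once this is established, the complex construction of \S\ref{ss:pf} goes through unchanged over $k$ and produces, for every $h \geq d^2 - 1$ and every $r \in \{1, \ldots, 17\}$, a quasi-polarized K3 surface of degree $2h$ with $r_0 = r$ and $r_d \geq 24 - r$, matching the complex case Theorem~\ref{thm}~(iii).

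Recall from the proof of Lemma~\ref{lem:isol} that $\pi$ degenerates modulo $\mathfrak p$ precisely when the singular modulus $j(E_0)$ of the order $\ZZ[\sqrt{-c_0}]$ satisfies $j(E_0) \equiv 12^3 \pmod{\mathfrak p}$. Since for $d \leq 20$ the representatives $c_0 \in \{2, \ldots, d+1\}$ used in \S\ref{ss:pf} satisfy $c_0 \leq 21$, Kaneko's bound $p > 4\sqrt{c_0}$ from \cite[Thm.~3]{Kaneko} already rules out degeneration for every prime $p \geq 19$, since $4\sqrt{21} < 19$. This requires no case work.

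For the small residual primes $p \in \{5, 11, 13, 17\}$ I would argue by a finite case-by-case check. For each $c_0 \in \{2, \ldots, 21\}$ one computes the Hilbert class polynomial $H_{-4c_0}(x) \in \ZZ[x]$ of the order $\ZZ[\sqrt{-c_0}]$, evaluates it at $x = 1728$, and factors the resulting integer $\prod_\sigma(\sigma j(E_0) - 1728) \in \ZZ$. The primes of degeneration at any $\mathfrak p \mid p$ are exactly the rational prime divisors of this norm. For $c_0 = 2$ one has $H_{-8}(1728) = 8000 - 1728 = 2^7 \cdot 7^2$, which is precisely the obstruction forcing the exclusion of $p = 7$ in the hypothesis. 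For all remaining pairs $(c_0, p)$ with $c_0 \leq 21$ and $p \in \{5, 11, 13, 17\}$ a direct computation would show $p \nmid H_{-4c_0}(1728)$, giving non-degeneration.

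The main obstacle is that the smallest value $h = d^2 - 1$ is reached only with $c_0 = d+1$ and $N = 2d+1$, and the quasi-ampleness conditions in Claim~\ref{claim} prevent one from replacing $c_0$ by a larger representative in the same residue class modulo $d$ without simultaneously increasing $h$. Thus the case check is rigid at the lower end: for each $d \in \{3, \ldots, 20\}$ one must specifically verify that the singular modulus of $\ZZ[\sqrt{-(d+1)}]$ avoids $1728$ modulo every prime in $\{5, 11, 13, 17\}$. This is a finite and tractable computation (at most a few dozen pairs, noting that for $c_0 \geq p^2/16$ nothing has to be checked); a conceptual shortcut via a sharper uniform form of Gross--Zagier would replace the tabulation, but within the range $c_0 \leq 21$ the explicit check is the natural route.
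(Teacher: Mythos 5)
Your overall strategy -- truncate via a Gross--Zagier/Kaneko type bound and then do a finite check of Hilbert class polynomials at $12^3$ modulo the remaining small primes -- is the same as the paper's, but two things go wrong. First, your criterion is stronger than what is needed and than what is true: since it suffices that the fibration be non-degenerate at \emph{some} prime $\p\mid p$ of $\mathcal O_K$, the condition to exclude is only that \emph{every} root of $H_{-4c_0}$ mod $p$ equals $12^3$, i.e.\ $H_{-4c_0}\equiv (x-12^3)^{h(-4c_0)}\bmod p$; demanding $p\nmid H_{-4c_0}(12^3)$ fails in harmless cases, e.g.\ $H_{-20}(12^3)=-2^{16}\cdot 11^2\cdot 19^2$, although mod $11$ and mod $19$ the class polynomial has a second root ($0$, resp.\ $7$) different from $12^3$, so a good prime $\p$ exists. (The same example shows that the blanket appeal to the $p>4\sqrt{c_0}$ bound for all $p\geq 19$ is delicate: for $c_0=5$ one conjugate of $j(E_0)$ \emph{does} reduce to $12^3$ modulo a prime above $19$; it is the ``some prime'' criterion that survives, and it has to be verified. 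Also the parenthetical ``for $c_0\geq p^2/16$ nothing has to be checked'' is backwards.) Second, and decisively, the asserted outcome of your computation is false: $(c_0,p)=(3,11)$ is a genuine obstruction, since $H_{-12}(x)=x-54000$, $54000\equiv 1\equiv 12^3\bmod 11$, and the class number is one, so every reduction of the relevant singular K3 surface is potentially degenerate and no choice of $\p$ helps. Your claim that only $(c_0,p)=(2,7)$ obstructs is therefore wrong, and the proof breaks exactly at such pairs, which are needed for certain residue classes of $h$ once $d\geq 3$.

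The paper's proof contains precisely the extra device your proposal lacks for these bad pairs: when $H_{-4c_0}\equiv(x-12^3)^n\bmod p$, one does not insist on producing the height-$2c_0$ section from discriminant $-4c_0$, but replaces it by the $N$-th multiple of a section of smaller height. Concretely, either one passes to $\tilde c_0=c_0/N^2$ when $N^2\mid c_0$, $c_0\neq N^2$, or one argues with the auxiliary singular K3 surface with transcendental lattice $\begin{pmatrix}2&1\\ 1&(4\tilde c_0+1)/2\end{pmatrix}$ when $4\tilde c_0\equiv -1\bmod 4$, whose fibration carries a section of height $\tilde c_0$ whose $N$-th multiple does the job. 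For $(c_0,p)=(3,11)$ this means taking $N=2$, $\tilde c_0=3/4$, replacing $j(E_0)=54000$ by the discriminant $-3$ value $j=0\not\equiv 12^3\bmod 11$, after which the non-degeneration argument goes through. Without this (or some substitute), your case-by-case check cannot be completed, so the proposal has a genuine gap.
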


\begin{Addendum}
\label{add}
At $p=7$, the same holds true except for $r_0=17, r_d\geq 7$,
but this can be replaced by $r_0=18, r_d\geq 6$.
\end{Addendum}

\begin{proof}
We argue along the lines of Section \ref{s:pf-p},
but now we determine precisely whether the special situation
$$j(E_0)\equiv 0\mod \p$$
from \S \ref{sss4}, \ref{sss5} persists
for \emph{every} prime $\p$ of $\mathcal O_K$ dividing $p$.
Equivalently, for each $c_0\in\{2,\hdots, 21\}$, and for $n=h(-4c_0)$ the class number of $-4c_0$, 
the minimal polynomial $f\in\ZZ[x]$ of $j(E_0)$
satisfies
\begin{eqnarray}
\label{eq:fmodp}
f \equiv (x-12^3)^n \mod p.
\end{eqnarray}
To improve on this criterion, it suffices at a given $c_0$ and $p$ if 
\eqref{eq:fmodp} can be excluded in one of the following settings:
\begin{itemize}
\item
for $\tilde c_0 = c_0/N^2$
in case $N^2\mid c_0$, but $c_0\neq N^2$
(since in this case we reduce to a height $2\tilde c_0$ section and then take its multiple by $N$), or
\item
 if $\tilde c_0 = c_0/N^2\in \frac 14\ZZ$, but $4\tilde c_0 \equiv -1\mod 4$
 (for then we can argue with the singular K3 surface $\tilde X_0$ with transcendental lattice 
 $$T(\tilde X_0) \cong \begin{pmatrix}2 & 1 \\ 1 & (4\tilde c_0+1)/2
 \end{pmatrix};$$ 
 Here the fibration $\pi$ admits a section of height $\tilde c_0$,
 and its $N$-th multiple will do the job as before.).
 \end{itemize}
We then go through all the minimal polynomials $f$ for discriminant $4c_0\geq -84$
and verify that at each prime $p>3, p\neq 7$ there is a root different from $12^3$
for this polynomial
or for one of the $\tilde c_0$ above.
Hence there is some prime $\p\mid p$ in $\mathcal O_K$ such that the fibration $\pi$ on $X_\p$ is non-degenerate
(i.e.\ the analogue of Theorem \ref{thm:c_0} holds, extended to include  $\tilde c_0$),
and we can conclude as in the complex case in \S \ref{ss:pf}.
\end{proof}

\begin{rem}
\label{rem:p=7}
A close inspection reveals that, for $p=7$, the above approach  only causes problems at squares $c_0$ mod $p$.
Of course, this was to be expected since there is only one supersingular curve in characteristic $7$,
namely the one with $j=12^3$.
\end{rem}

\subsection{Special characteristic}

To cover the case of characteristic $p=7$ from Addendum \ref{add},
we proceed in several steps.

\subsubsection{3-isogeny}

First we apply a 3-isogeny to the elliptic K3 surfaces  $X_0$
corresponding to non-squares $c_0$ mod $p$.
Here we start with the  elliptic fibration $\pi$ with $3$-torsion section
and get a 3-isogenous K3 surface $X_0'$ with a section of height $6c_0$ (via pull-back by the dual isogeny)
such that
there is a finite index sublattice
\[
\NS(X_0') \supset U \oplus A_5 \oplus A_2^6 \oplus \langle -6c_0\rangle.
\]
Since this is automatically non-degenerate (as a consequence of \S \ref{ss:31}), it allows to cover the values $3c_0$
along exactly the same lines as in the previous section.

\subsubsection{5-torsion}

To cover the values $c_0=2,8,11$, we consider the family of complex elliptic K3 surfaces
which generally have fibres of types $\IK_{10}, 2 \times \IK_5, 4 \times \IK_1$
and $\MW_\text{tor} \cong \ZZ/5\ZZ$.
As in \S \ref{ss:warm-up}, this arises by quadratic base change
from a modular elliptic surface which is rational;
this implies that singular fibres are always multiplicative outside characteristic $5$,
and the only degeneration could be two fibers of the same type merging.
Using the discriminant, one verifies that generically
\[
T \cong U \oplus \langle 10\rangle.
\]
This already shows that there are members in the family with any
\[
T = \langle 2c_0\rangle \oplus  \langle 10\rangle \;\;\; (c_0>0).
\]
Obviously, for any $c_0>1$, this translates into a section of height $2c_0$.
At the given values of $c_0$, we have ordinary reduction in characteristic $p=7$ by \eqref{eq:Legendre},
so we get the required K3 surface $X_p$ with a finite index inclusion
\[
\NS(X_p) \supset U \oplus A_9 \oplus A_4^2 \oplus  \langle -2c_0\rangle.
\]

\subsubsection{6-torsion}

The above constructions do not cover easily the values $c_0=4, 16$.
These appear to be surprisingly subtle, so we shall take a little detour, using ideas from \cite{GS} and \cite{S-NS20}.

Consider the family of complex elliptic K3 surfaces $X$ with singular fibres of type $\IK_{12}, 2 \times \IK_3, 2\times\IK_2,2\times\IK_1$
and with $\MW_\text{tor}(X) = \ZZ/6\ZZ$.
This arises from the rational elliptic surface $S$ which is the modular elliptic surface for $\Gamma_1(6)$
by a quadratic base change ramified at $\IK_6$ (say at $\infty$) and at a varying place $s\in\PP^1$.

We shall use that $s$ is a Hauptmodul for the modular curve $X^*(6) = X_0(6)/\langle w_2,w_3\rangle$
where we quotient by both Fricke involutions.
To see this, consider the quadratic twist $S'$ of $S$ at $\infty$ and $s$.
By construction, this is the quotient of $X$ by a Nikulin involution (also parametrised by $s$).
It is easy to show that the family of $S'$ agrees with the family $\mathcal X_3$ of K3 surfaces from \cite[\S6]{GS},
for instance by identifying a divisor of Kodaira type $\IK_6^*$ together
with two sections and  perpendicular configurations of types $A_2, A_1, D_4$ in 
\cite[Fig.\ 5]{GS}.
Explicitly, this leads to the relation $s=-2/r$ for the parameter $r$ from \cite[\S6]{GS},
which itself relates to the standard Hauptmodul $a$ of $X^*(6)$ by $a = -2(r + 2)/(4r - 1).$

Recall that we are mainly interested in exhibiting a section $P$ of height $2c_0=8$ 
(since then the case $c_0=16$ follows readily by considering $2P$).
Using the expression for the j-invariants of the  elliptic curves underlying $X^*(6)$ from \cite[(21)]{GS},
we can solve for them to have CM by the ring of integers 
in $K=\QQ(\sqrt{-6})$.
This leads to $r=5\pm 2\sqrt{3}$, giving a (pair of) singular K3 surface $X$ with CM in $K$,
whose precise transcendental lattice escapes us at first,
but we have to be somewhat inventive to get what we need in characteristic $7$
(which a posteriori also allows to compute $T(X)$, see Corollary \ref{cor:T(X)}).

To this extent, consider the reduction modulo $p=7$,
an ordinary K3 surface $X_p$ over $\FF_q$ where $q=p^2$.
One finds that all fibres are non-degenerate (as opposed to the two $\IK_1$ fibres merging,
which would result in discriminant $-24$, cf.\ the entry \cite[Table 1]{GS} (which corresponds to a cusp of $X^*(6)$)).
Moreover, we are free to arrange for the fibres of type $\IK_{12}$ and $\IK_3$ to be located at $\infty$ resp.\ at $\pm 1$.
It turns out that that this makes all singular fibres defined over $\FF_q$; what is more, all fibres are automatically split over $\FF_q$
because they originate from the surface $S$ with singular fibres defined over $\QQ$.
In consequence, 
$$
\rho(X_p/\FF_q) =  19 \;\;\; \text{ or } \;\;\; 20,
$$
 depending on the field of definition of the remaining generator of $\NS(X_p)$,
section $P$ of height a square multiple of $2$ by inspection of the CM field $K$.
The Lefschetz fixed point formula for the action of Frob$_q$ thus returns
\begin{eqnarray}
\label{eq:lef}
\#X_p(\FF_q) = 1 + 19 q \pm q + a_q + q^2.
\end{eqnarray}
Here $a_q$ denotes the trace of Frob$_q$ for the Galois representation induced by the transcendental lattice of $X$ over $\CC$.

We now reduce \eqref{eq:lef} modulo $6$.
Indeed, the left-hand side is congruent to zero modulo $6$
since every smooth fibre $F$ satisfies $6\mid\#F(\FF_q)$ by virtue of the $6$-torsion sections,
and the singular fibres contain $24q$ points in total since they are split over $\FF_q$.
Thus we get
\[
0 \equiv 3 \pm 1 \pm 2 \mod 6,
\]
which translates into the following two cases:
\begin{enumerate}
\item
$P$ is defined over $\FF_q$ and $a_q=2$,
\item
$P$ is defined over $\FF_{q^2}$, but not over $\FF_q$, and $a_q=-2$.
\end{enumerate}
We now employ the Artin-Tate conjecture as in \cite{S-NS20}.
In case (2), this gives the relation
\begin{eqnarray*}
200/q  & = &  [(1-a_qT+q^2T^2)(1+qT)]_{T=1/q} \\
& = & \#\mbox{Br}(X/\FF_q) \cdot |\det\NS(X/\FF_q)|/q \;\; = \;\;  N^2 12/q,
\end{eqnarray*}
where we use that $\#$Br$(X/\FF_q)$ is a square $N^2$ and $\NS(X/\FF_q)$ equals the trivial lattice
comprising fibre components and torsion sections.
But then the above equality is literally impossible.
Therefore case (1) persists, and the same approach gives
\[
96/q = [(1-a_qT+q^2T^2)]_{T=1/q} = \#\mbox{Br}(X/\FF_q) \cdot |\det\NS(X/\FF_q)|/q.
\]
By \cite{MWL}, $X_p$ has to admit a section $P$ of height $2$ or $8$ 
which generates $\NS(X_p)$ (of discriminant $-24$ or $-96$) together with the  trivial lattice.
(This uses the fact that  the singular fibres are non-degenerate on $X_p$, as we checked.)
The cases of $c_0=4, 16$ are thus settled as desired by the following:

\begin{claim}
\label{claim:2}
There is a section of height $8$ meeting each fibre in the identity component.
\end{claim}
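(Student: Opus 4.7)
The strategy combines the height formula on an elliptic K3 surface with the Shioda--Tate determinant formula. The height formula states that any section $R$ on $X_p$ satisfies
$$
h(R) = 4 + 2(R.O) - \sum_v \mathrm{contr}_v(R),
$$
with each local contribution non-negative and zero exactly when $R$ meets the identity component of the fibre over $v$. So a section of height $8$ meets every identity component if and only if $(R.O) = 2$; equivalently, if and only if it lies in the narrow Mordell--Weil sublattice $\MWL^0 \subseteq \MWL$.

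Next, I use the singular fibre configuration $\IK_{12} + 2\IK_3 + 2\IK_2 + 2\IK_1$: the corresponding root lattice $A_{11} \oplus A_2^{\oplus 2} \oplus A_1^{\oplus 2}$ gives $|\det T_{\mathrm{triv}}| = 432$, and Shioda's formula together with $|\MW_{\mathrm{tor}}|^2 = 36$ and the Artin--Tate discriminant $|\det \NS(X_p/\FF_q)| \in \{24, 96\}$ yield $|\det \MWL| \in \{2, 8\}$. Hence a generator $P$ of the rank-one Mordell--Weil lattice has height $2$ or $8$.

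Finally, I descend from $\MWL$ to $\MWL^0$. The narrow index $n := [\MWL : \MWL^0]$ equals the order of the image of $P$ in $\prod_v G_v \cong \ZZ/12 \oplus (\ZZ/3)^2 \oplus (\ZZ/2)^2$ modulo the image of $\MW_{\mathrm{tor}}$, a distinguished $\ZZ/6\ZZ$-subgroup inherited from the six torsion sections of the modular base fibration $S$. A finite case analysis shows that when $h(P) = 8$ one has $n = 1$, so $P$ itself is narrow; when $h(P) = 2$ one has $n \leq 2$, so (possibly after a torsion translation) $2P$ is narrow of height $8$. Either way, $\MWL^0$ contains a section of height $8$, as required.

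The main obstacle is this last combinatorial step, which requires tracking how $P$ interacts with the component images of the six torsion sections. Once the component-group images of torsion and of $P$ are made explicit via the description of $X_p$ as a quadratic base change of the modular rational elliptic surface $S$ for $\Gamma_1(6)$, this reduces to an elementary finite check rather than a deep obstruction.
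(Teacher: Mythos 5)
Your overall skeleton matches the paper up to the decisive step: the discriminant bookkeeping ($|\det T_{\mathrm{triv}}|=432$, torsion of order $6$, Artin--Tate discriminant $24$ or $96$, hence $h(P)\in\{2,8\}$) is already established in the paper \emph{before} the Claim, and, like the paper, you plan to reach the narrow lattice by a torsion translation. But the entire content of the Claim sits in your sentence ``A finite case analysis shows that when $h(P)=8$ one has $n=1$ \dots\ when $h(P)=2$ one has $n\le 2$,'' and you neither carry out this analysis nor indicate a method that could. Your suggestion --- make the component-group image of $P$ explicit via the description of $X_p$ as a quadratic base change of the modular surface for $\Gamma_1(6)$ --- only determines the images of the six torsion sections; $P$ is precisely the extra section that appears after reduction modulo $7$ and is invisible in the base-change structure, so its fibre components cannot be read off this way. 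The only available constraints are the height formula combined with integrality and non-negativity of intersection numbers (equivalently, the Mordell--Weil pairing against torsion sections), and exploiting these is exactly what the paper does: it pins down the admissible contraction configurations, rules out $h(P)=2$ altogether by pairing $P$ with the $2$-torsion section $Q$ (obtaining $0=\langle P,Q\rangle\le -1$), and in the height-$8$ case passes to $P+Q$ to land in the identity components. Without such an argument your bounds on $n$ are assertions, not proofs: a priori a height-$2$ generator could, say, meet component $4$ of the $\IK_{12}$ fibre and non-identity components of the two $\IK_3$ fibres not matching those of the $3$-torsion section, which would make the image of $P$ modulo torsion have order divisible by $3$ and destroy your conclusion about $2P$; excluding this possibility requires precisely the omitted computation.

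A smaller slip: $n=1$ does not give that ``$P$ itself is narrow,'' only that some torsion translate of $P$ is; and this is not a technicality, since a height-$8$ section meeting the component of $\IK_{12}$ opposite the identity and the non-identity components of both $\IK_2$ fibres with $(P.O)=4$ is numerically admissible, so the translation by the $2$-torsion section is genuinely needed --- exactly as in the paper's proof.
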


\begin{proof}[Proof of Claim \ref{claim:2}]
Assuming there is a section $P$ of height $2$,
the possible contraction terms from \cite{MWL} call for the configuration of 
$\IK_{12}$ being met by $P$ at the component opposite to the identity component,
and both $\IK_2$ at the non-identity component (plus $(P.O)=1$).
But then, the 2-torsion section $Q$ meets exactly the same fibre components,
so the Mordell--Weil pairing gives the contradiction
$$
0=\langle P,Q\rangle = 2 + (P.O) - (P.Q)  - 3 - 1/2 - 1/2 \leq -1.
$$
Hence there is a section $P$ of height $8$ by what we have seen before.
Again, $P$ can only meet non-identity components in the same configuration as above,
but with $(P.O)=4$.
Anyway, $P+Q$ then meets all fibres at the identity components while again having height $8$.
\end{proof}

\begin{cor}
\label{cor:T(X)}
Let $\sigma$ be the conjugation in Gal$(\QQ(\sqrt{-3})/\QQ)$.
Then 
$X\cong X^\sigma$ as complex K3 surfaces,
with transcendental lattice
$T(X) \cong \mbox{diag}(8,12)$.
\end{cor}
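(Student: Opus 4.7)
The plan is to determine $T(X)$ from the explicit Mordell--Weil structure supplied by Claim~\ref{claim:2} and then deduce $X \cong X^\sigma$ via Shioda--Inose. First, Shioda's height formula gives
\[
|\det \NS(X)| \;=\; \frac{|\det T_{\mathrm{triv}}|\cdot 8}{|\MW_{\mathrm{tor}}|^2} \;=\; \frac{(12\cdot 9\cdot 4)\cdot 8}{6^2} \;=\; 96,
\]
where $T_{\mathrm{triv}} = U \oplus A_{11} \oplus A_2^{\oplus 2} \oplus A_1^{\oplus 2}$ is the trivial lattice of the fibration, the factor $8$ is the height of the section $P$ provided by Claim~\ref{claim:2}, and $|\MW_{\mathrm{tor}}|=6$. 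Since $\NS(X)$ embeds primitively into the unimodular K3 lattice $\Lambda_{K3}$, the transcendental lattice $T(X)$ is a rank-$2$ positive-definite even lattice of discriminant~$96$.

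Next I would identify $T(X) \cong \mathrm{diag}(8,12)$ using the complex multiplication of $X$. The Hauptmodul values $r = 5 \pm 2\sqrt{3}$ on $X^*(6)$ are CM points, so $X$ has CM by an order in $K = \QQ(\sqrt{-6})$; the Lefschetz computation underlying Claim~\ref{claim:2}, combined with the Weil bounds, produces Frobenius eigenvalues $1 \pm 20\sqrt{-6}$ on the transcendental $\QQ_\ell$-cohomology at $p = 7$, so the CM is by the maximal order $\mathcal{O}_K = \ZZ[\sqrt{-6}]$. A short check reveals that among reduced even positive-definite rank-$2$ lattices of discriminant $96$, only two admit an $\mathcal{O}_K$-action, namely $\mathrm{diag}(4,24)$ and $\mathrm{diag}(8,12)$, corresponding to the two ideal classes of $\mathcal{O}_K$ (recall $h(-24)=2$). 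These are distinguished by their discriminant forms, equivalently by their minimal vectors ($4$ versus $8$). Computing the discriminant form of $\NS(X)$ directly from its generators --- the trivial lattice, the $\ZZ/6$ torsion glued into $\bigoplus (A_*^{\vee}/A_*)$, and the height-$8$ section --- and comparing with the negative of the discriminant form of each candidate, matches the one of $\mathrm{diag}(8,12)$ and thus pins down $T(X)$.

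Finally, the conjugate $X^\sigma$ corresponds to the other CM point $r = 5 - 2\sqrt{3}$ and carries an isomorphic elliptic fibration (the singular fibres, the $\ZZ/6$ torsion, and the height-$8$ section from Claim~\ref{claim:2} all transport under $\sigma$), so running the above argument for $X^\sigma$ yields $T(X^\sigma) \cong \mathrm{diag}(8,12) \cong T(X)$; Shioda--Inose then forces $X \cong X^\sigma$ over $\CC$. The \emph{main obstacle} lies in the second paragraph: the discriminant-form computation that singles out $\mathrm{diag}(8,12)$ over $\mathrm{diag}(4,24)$ is finite but subtle, requiring careful bookkeeping of the torsion gluings and of the height-$8$ section inside $\NS(X)$.
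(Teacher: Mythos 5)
Your proposal has a genuine gap at its very first step. Claim~\ref{claim:2} (and the Artin--Tate / Shioda--Tate analysis it rests on) is a statement about the reduction $X_p$ in characteristic $7$, not about the complex surface $X$: the height-$8$ section is produced on $X_p$, and it is there that it is shown to generate the N\'eron--Severi group together with the trivial lattice. Your height-formula computation $|\det\NS(X)|=432\cdot 8/6^2=96$ tacitly assumes that $X$ itself carries a section of height $8$ generating $\MW(X)$ modulo torsion --- but that is exactly what has to be proved, since sections do not automatically lift from characteristic $p$ to characteristic $0$. A priori one only has the specialization embedding $\NS(X)\hookrightarrow\NS(X_p)$ of \eqref{eq:spec}, which is of some finite index $n$ (both ranks equal $20$ because $X_p$ is ordinary), so all one can conclude is $\det\NS(X)=-96\,n^2$, and correspondingly $T(X)$ could a priori be, say, $\mbox{diag}(8,12n^2)$. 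The paper explicitly warns (just before Lemma~\ref{lem:isol}) that this embedding need not be primitive when the field of definition is larger than $\QQ$ --- which is the case here, as $r=5\pm2\sqrt{3}$ --- and its proof of the corollary consists precisely in closing this gap: by \cite[Thm.\ 4.1.2.1]{Raynaud} the specialization map at $p=7$ is primitive, hence an isometry because the ranks agree, and only then does the discriminant-form computation carried out on $X_p$ determine $\NS(X)$; the lattice $T(X)$ is then pinned down because its genus consists of a single class, which at the same time yields $X\cong X^\sigma$. Without this step (or some substitute for it) your argument computes $\NS(X_p)$ and says nothing about $\NS(X)$ or $T(X)$.

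A secondary point: the inference that the Frobenius eigenvalues $1\pm 20\sqrt{-6}$ force CM by the maximal order $\ZZ[\sqrt{-6}]$ is not justified --- Frobenius only gives $\ZZ[20\sqrt{-6}]\subseteq\mathrm{End}$, an order of index $20$ in $\mathcal{O}_K$, so intermediate orders are not excluded this way. This step is, however, redundant in your own scheme: the concluding discriminant-form comparison (which is essentially the paper's final step) already distinguishes $\mbox{diag}(8,12)$ among \emph{all} even positive-definite binary lattices of determinant $96$, maximal CM order or not --- provided the determinant and the generators of $\NS(X)$ have first been justified as above.
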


\begin{proof}
By \cite[Thm.\ 4.1.2.1]{Raynaud},
the specialization map at $p=7$,
\[
\NS(X) \hookrightarrow \NS(X_p),
\]
is a primitive embedding. Since the ranks are the same by the choice of $X$,
we get an isometry. Thus we compute the discriminant form of $\NS(X)$
to be the opposite of the claimed transcendental lattice.
This is uniquely determined by its discriminant form,
since its genus consists of a single class by the theory of binary quadratic forms.
\end{proof}

\subsubsection{Supersingular reduction lattice}
\label{sss:p=7}

Consider the elliptic modular surface $Y$ for $\Gamma_1(8)$.
This is a K3 surface (defined over $\ZZ[1/2]$) with singular fibres of types
$2 \times \IK_8, \IK_4, \IK_2, 2\times \IK_1$
and $\MW(Y)\supset\ZZ/8\ZZ$.
We shall crucially use the fact that the torsion subgroup causes 
the fibre configuration to be
never degenerate in odd characteristic.
Over $\CC$, the transcendental lattice is
$T(Y) = \mbox{diag}(2,4)$;
at a supersingular prime $p\equiv 5,7\mod 8$,
this is reflected in the narrow Mordell--Weil lattice 
(i.e.\ the orthogonal complement of the trivial lattice $U\oplus A_1\oplus A_3 \oplus A_7^2$ inside $\NS(Y_p)$)
being 
$$
\MWL^0(Y_p) = T[-p],
$$
the transcendental lattice scaled by $-p$.
In particular, this confirms that there are sections, meeting only the identity components of the singular fibres,
of heights $2p, 4p, 6p$ and $8p$.
This gives surfaces with $r_0=18$ and $r_d\geq 6$
at the residue classes $c_0\equiv p, 2p, 3p, 4p \mod d$,
yielding the same bounds for $H^2$ as in Theorem \ref{thm} (iii) at $p=7$, but with $r_0=18$ instead of $r_0=17$.

To prove Addendum \ref{add} for these values of $c_0$ (in fact, in a unified way for all $p\equiv 5,7\mod 8$ with $p>5$)
it remains to verify that from the terminal object $Y_p$,
we can get all intermediate pairs $(r_0, r_d) = (r, 24-r)$ with $r\in\{1,\hdots,16\}$.
To see this, we start as in \S \ref{ss:warm-up} with an auxiliary series of primitive embeddings of root lattices $R_i$ of respective rank $i$,
\begin{eqnarray*}
A_1 = R_1 & \hookrightarrow  &
R_2 \; \hookrightarrow \; \hdots \;  \hookrightarrow  \; R_{16} = A_3^5\oplus A_1\\
&  \hookrightarrow  & R_{17}  = 
 A_1 
\oplus A_3^3 \oplus A_7
\; \hookrightarrow \;  A_1 
\oplus A_3 \oplus A_7^2 = R_{18},
\end{eqnarray*}
where the last lattice corresponds to the fibre components of $Y_p$.
We can stratify the moduli space of jacobian elliptic 
K3 surfaces into strata $\mathcal P_i'$
such that $U\oplus R_i\hookrightarrow\NS$.
As in \S \ref{ss:warm-up}, each inclusion is componentwise of codimension one -- because $Y_p$ does not deform with $U\oplus R_{18}$
by the very same arguments as in Lemma \ref{lem:deform}.
That is, $\dim \mathcal P_i' = 18-i$.
Now we enhance with what is meant to correspond to a section of height $2c_0$ and stratify by $\mathcal M_i'$ such that
\begin{eqnarray}
\label{eq:R_i+}
U \oplus R_i \oplus\langle -2c_0\rangle \hookrightarrow\NS.
\end{eqnarray}
Since $Y_p\in \mathcal M_i'\neq\emptyset$, 
only the latter 2   alternatives from \S \ref{ss:warm-up} are in effect for the inclusions $\mathcal M_i'\subseteq \mathcal P_i'$.
In particular, each component of $\mathcal M_{16}'$ has dimension $1$ or $2$.

\begin{claim}
\label{claim-P}
For each component $Z\subset\mathcal M_{16}'$, we have $\dim Z = 1$.
\end{claim}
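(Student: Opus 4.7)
The plan is to rule out $\dim Z = 2$ for any component $Z \subseteq \mathcal{M}_{16}'$; under the trichotomy from \S \ref{ss:warm-up}, this is precisely the condition that $Z$ not coincide with any component of $\mathcal{P}_{16}'$. My strategy is to rerun the inductive codimension-one argument that established $\dim \mathcal{P}_i' = 18 - i$, but now applied to the enhanced chain $\mathcal{M}_{18}' \subsetneq \cdots \subsetneq \mathcal{M}_{16}'$, thereby obtaining $\dim \mathcal{M}_i' = 17 - i$ and in particular $\dim \mathcal{M}_{16}' = 1$.

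The essential input is a rigidity statement for $Y_p$ inside $\mathcal{M}_{18}'$, i.e.\ the analogue of Lemma \ref{lem:deform} applied to the enhanced rank-$21$ sublattice $U \oplus R_{18} \oplus \langle -2c_0 \rangle \hookrightarrow \NS(Y_p)$. I would verify this along exactly the same lines as Lemma \ref{lem:deform}: combining the $p$-length contribution of the sublattice discriminant form (which for $U \oplus R_{18}$ is supported at $2$, and is enhanced by the extra summand $\langle -2c_0\rangle$) with the rank-$1$ orthogonal complement inside $\NS(Y_p)$, the same Kond\=o--Shimada style bound (as in \cite[Thm.\ 6.1]{KS}) yields $\sigma_0 = 1$, so $Y_p$ is an isolated point of $\mathcal{M}_{18}'$. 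With this rigidity in place, walking down the chain $\mathcal{M}_{18}' \subsetneq \mathcal{M}_{17}' \subsetneq \mathcal{M}_{16}'$ with each step of codimension exactly one (codimension at most one by the trichotomy, codimension at least one by the isolated-point starting condition) produces $\dim \mathcal{M}_{16}' = 1$ for the component whose closure contains $Y_p$.

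The same inductive argument applies to any other component of $\mathcal{M}_{16}'$ via its respective terminal object in $\mathcal{M}_{18}'$, so once the rigidity at the top of the chain is established, the bound $\dim Z = 1$ follows uniformly. The main obstacle is precisely the combinatorial-lattice discriminant computation for the enhanced lattice $U \oplus R_{18} \oplus \langle -2c_0 \rangle$: one has to track the $p$-torsion carefully when $p \mid c_0$, since then the extra summand contributes an additional factor to the discriminant form that is not present in Lemma \ref{lem:deform}. Once this $p$-length bookkeeping is carried out and shown to keep $\sigma_0 = 1$ at the relevant supersingular primes $p \equiv 5, 7 \pmod 8$ (with $p > 5$), the rest of the proof is a mechanical reuse of the codimension-one induction.
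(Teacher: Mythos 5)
Your proposed mechanism does not work, and it is not the paper's argument. The codimension-one induction that gives $\dim \mathcal P_i' = 18-i$ succeeds because the number of lattice conditions exactly matches the total dimension drop down to the rigid terminal object; for the enhanced chain this bookkeeping fails. The terminal object $Y_p$ lies in $\mathcal M_{18}'$, whose \emph{expected} dimension is $17-18=-1$: its very membership is an ``unexpected'' supersingular accident, so along the chain from the $18$-dimensional moduli space down to the $0$-dimensional component $\{Y_p\}$ there are $19$ conditions but only $18$ dimensions to lose, hence at least one step is slack and nothing tells you where. In particular, ``codimension at least one at each step because the bottom is an isolated point'' is not a valid inference; worse, if every step $\mathcal M_{18}'\subset\mathcal M_{17}'\subset\mathcal M_{16}'$ really had codimension exactly one, you would get $\dim\mathcal M_{16}' = 0+2 = 2$ for the component through $Y_p$ --- precisely the case the Claim must exclude, not $\dim = 1$. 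Finally, the Claim is about \emph{every} component $Z\subset\mathcal M_{16}'$, and an arbitrary component need not contain any point of $\mathcal M_{18}'$ (or $\mathcal M_{17}'$), so there is no ``respective terminal object'' to anchor your induction. The rigidity computation you flag as the main obstacle (the $p$-length of $U\oplus R_{18}\oplus\langle-2c_0\rangle$) is not the crux; rigidity of $Y_p$ with the rank-$20$ lattice $U\oplus R_{18}$ is already in place and only yields the upper bound $\dim Z\le 2$ via $\mathcal M_{16}'\subseteq\mathcal P_{16}'$.

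What is actually needed, and what the paper does, is a direct argument on a generic member $W$ of a putative $2$-dimensional component $Z$: since $\rho(W)\ge 19$, either $W$ has finite height --- but then by Deligne's theory it deforms with the rank-$19$ lattice \eqref{eq:R_i+} in an at most $(20-19)=1$-dimensional family, contradicting $\dim Z=2$ --- or $W$ is supersingular, in which case a $2$-dimensional supersingular family forces generic Artin invariant $\sigma_0\ge 3$, i.e.\ $p$-length $2\sigma_0\ge 6$ of $\NS(W)^\vee/\NS(W)$; this is impossible because the rank-$19$ sublattice contributes $p$-length at most $1$ and its rank-$3$ orthogonal complement at most $3$, exactly as in the proof of Lemma \ref{lem:deform}. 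Your proposal contains no substitute for this finite-height versus supersingular dichotomy, which is the actual content of the Claim.
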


\begin{proof}[Proof of Claim \ref{claim-P}]
Assume that $\dim Z = 2$ and let $W\in Z$.
Since $\rho(W)\geq 19$, the K3 surface $W$ cannot have finite height,
else it would only deform with the rank $19$ sublattice from \eqref{eq:R_i+} in a one-dimensional family.
Hence $\rho(W)=22$, but then supersingular K3 surfaces can only deform in a two-dimensional family
if generally the Artin invariant $\sigma_0\geq 3$.
This, however, is ruled out by a $p$-length argument as in the proof of Lemma \ref{lem:deform}.
\end{proof}

Note that Claim \ref{claim-P} implies that every component $Z'\subset\mathcal M_i'$ has the expected dimension $18-i$ for any i.

We conclude the proof of Addendum \ref{add} for the given values of $c_0$
by verifying that there is K3 surface $Y'\in\mathcal P_{16}'$ with non-degenerate fibres in characteristic $p$.
First, all singular fibres are multiplicative due to the torsion section.
Secondly, we have to show that the reducible fibres are exactly given by the orthogonal summands of $R_{16}$,
i.e.\ 5 times $\IK_4$ and once $\IK_2$.

Assume to the contrary that this does not hold true for a generic member $Y_\eta$ of $\mathcal P_{16}'$.
Since $R_{16}\oplus \langle-2c_0\rangle$ does not contain a root overlattice of finite index (essentially because $p\mid c_0$
and $p>5$),
we deduce that there is a root lattice $R_{17}'$ such that
\begin{eqnarray}
\label{eq:Y_eta}
U \oplus R_{16}\hookrightarrow
U \oplus R_{17}' \hookrightarrow \NS(Y_\eta)
\end{eqnarray}
where both inclusion have corank one or more.
In particular, $\rho(Y_\eta)\geq 20$, 
so that in fact $\rho(Y_\eta)=22$
due to the one-dimensional family which $Y_\eta$ lives in.
But then, by going through all possible rank 17 root lattices $R_{17}'$ containing $R_{16}$, we find that $p\nmid\det(R_{17}')$
as soon as $p>5$. 
Hence the proof of Lemma \ref{lem:deform} strikes again to show that $\sigma_0=1$,
contradicting the 1-dimensional space of deformations given by $\mathcal P'_{16}$.
Therefore, the generic member $Y_\eta$ of $\mathcal P_{16}'$ has non-degenerate singular fibres as claimed,
and Addendum \ref{add} follows for $c_0 \equiv p, 2p, 3p, 4p \mod d$ at $p=7$
(and at all $p\equiv 5,7\mod 8, \, p>5$ alike).
\qed

\subsubsection{Conclusion}

With all $c_0\leq 21$ covered in characteristic $p=7$, we can prove Addendum \ref{add}
exactly as in \S \ref{ss:CC} -- with the same  bounds, but with $r_0=18, r_d\geq 6$ replacing $r_0=17, r_d\geq 7$
in the cases with $p\mid c_0$ from \S \ref{sss:p=7}.

\begin{rem}
The same approach also works for other characteristics
to improve on the bounds from Theorem \ref{thm} (iv) and (v).
\end{rem}

\section{Unconditional results}

We conclude this paper by indicating how to derive unconditional results
with better bounds for $h$ if we content ourselves with fewer degree zero curves.
As a side effect, the result also covers characteristic $3$.

\begin{theo}
\label{thm:r<=14}

Let $d\geq 3, \, p \equiv 1 \mod 4$ and $h\geq 2d^2+d+1$.
For any   
$r\in\{1,\hdots, 14\}$, 
there are K3 surfaces of degree $2h$ over some field of characteristic $p$ such that 
$$
r_d\geq 24-r \;\;\;  \mbox{ and } \;\;\;  r_0 =r.$$
For $p\equiv 3\mod 4$, the same holds true for any $r\in\{1,\hdots, 13\}$.
\end{theo}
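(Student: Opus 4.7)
The plan is to replace the Shioda--Inose construction of Section~\ref{s:pf-0}---the source of the prime-by-prime analysis of Section~\ref{s:pf-p}---by a moduli-theoretic existence argument that works uniformly in every characteristic $p\geq 3$. The key point is that for $r\leq 14$ the stratum $\mathcal M_r$ of jacobian elliptic K3 surfaces carrying an $\IK_{r+1}$-fibre and a section of height $2c_0$ has expected dimension $17-r\geq 3$, so a general member carries enough room to avoid every degeneration locus. As a side effect this also picks up $p=3$, which was previously ruled out only because \S\ref{ss:warm-up} appealed to the extremal $\IK_9$ rational elliptic surface with $\MW=\ZZ/3\ZZ$, a construction that breaks in characteristic $3$ but is avoidable for $r\leq 14$.

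First I would verify, following \S\ref{ss:warm-up}--\S\ref{ss:strategy}, that in every characteristic $p\geq 3$ the stratum $\mathcal M_r$ is non-empty of the expected dimension $17-r$. For $r\leq 14$ the required rational-elliptic ingredient can be realised by an explicit Weierstrass equation over $\ZZ[1/6]$, without any appeal to the $\IK_9$ extremal surface, and the inductive codimension argument then produces a component $Z\subset\mathcal M_r$ of the expected dimension over any algebraically closed field of characteristic $p\geq 3$. I would then argue that the locus of $X\in Z$ for which the fibration degenerates (extra reducible fibres, or collapse of the height-$2c_0$ section under specialisation) is a proper closed subvariety of $Z$. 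When $p\equiv 1\mod 4$ the only failure loci are those of fibre merging, each of codimension $\geq 1$, so $\dim Z\geq 3$ comfortably suffices to extract a generic non-degenerate member. When $p\equiv 3\mod 4$ there is an additional codimension-one stratum of supersingular specialisations where the analysis of \S\ref{sss5} can dissolve the section of height $2c_0$; eliminating this costs one extra unit of moduli, which is precisely what shrinks the admissible range to $r\leq 13$ in the second case.

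With a non-degenerate $X\in Z$ in hand, Claim~\ref{claim} produces a quasi-ample, non-hyperelliptic divisor $H=NF+dO+v$ of square $2d(N-d)-2c_0$ for every $N>\max(2d,c_0+1)$, and the usual count of fibre components yields $r_0=r$ and $r_d\geq 24-r$. Letting $c_0$ range over a full set of representatives of $\ZZ/d\ZZ$ and $N$ over all sufficiently large integers, every $h\geq 2d^2+d+1$ is covered; the shift relative to the complex bound $h\geq d^2-1$ reflects that in the moduli-theoretic argument one must push $N$ slightly further into the stable range than in the complex case of \S\ref{ss:pf}. The main obstacle I anticipate is the precise codimension bookkeeping in the $p\equiv 3\mod 4$ case: one must show that the supersingular specialisations in $Z$ which kill the prescribed section of height $2c_0$ form a locus of the expected codimension, a delicate computation intertwining the Artin-invariant stratification with the behaviour of Mordell--Weil generators under reduction, and this is what is ultimately responsible for the gap between the two congruence classes of $p\mod 4$.
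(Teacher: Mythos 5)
Your proposal has a genuine gap. You keep the lattice $U\oplus A_r\oplus\langle -2c_0\rangle$, i.e.\ you still ask for a section of height $2c_0$, and you hope that a dimension count on the strata $\mathcal M_r$ forces a non-degenerate member to exist in characteristic $p$. But the stratification argument of \S\ref{ss:warm-up}--\S\ref{ss:strategy} only says that each component of a stratum is empty, or of codimension one in a component of the previous stratum, or \emph{equal} to such a component; to conclude that $\mathcal M_r$ is non-empty of the expected dimension $17-r$ \emph{and} that a general member satisfies the open conditions (only one reducible fibre, of multiplicative type $\IK_{r+1}$, and a class of square $-2c_0$ realised by an honest section) one needs a terminal witness in characteristic $p$. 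That witness is exactly what the Shioda--Inose surface provides over $\CC$, and producing it in characteristic $p$ is the whole difficulty of Section~\ref{s:pf-p}; your ``explicit Weierstrass equation over $\ZZ[1/6]$'' is not specified and cannot be routine, because the delicate ingredient is the height-$2c_0$ section for an arbitrary residue $c_0 \bmod d$, not the $A_r$-configuration. Moreover ``$\dim Z\geq 3$, and the failure loci have codimension $\geq 1$'' does not rule out that a component of $\mathcal M_r$ is \emph{entirely} contained in a degeneration locus (the third alternative listed in \S\ref{ss:warm-up}); the paper excludes this by terminal objects together with Artin-invariant/$p$-length arguments as in Lemma~\ref{lem:deform}, not by dimension counting. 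Finally, your explanation of the asymmetry $r\leq 14$ versus $r\leq 13$ (an extra codimension-one supersingular locus for $p\equiv 3\bmod 4$) is asserted rather than proved, and it is not the actual source of the asymmetry.

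For comparison, the paper's proof of Theorem~\ref{thm:r<=14} dispenses with the height-$2c_0$ section altogether. It starts from the explicit K3 surface $Y:\,y^2=x(x-1)(x-t^4)$ with two $\IK_8$ and four $\IK_2$ fibres and $\MW(Y)_{\text{tor}}\cong\ZZ/4\ZZ\times\ZZ/2\ZZ$, whose fibration never degenerates mod $p>2$; it writes $c_0=a_1^2+\cdots+a_4^2$ and realises the degree-$d$, square $-2c_0$ class by the divisor $D=d(F+O)+a_1\Theta_1+\cdots+a_4\Theta_4$ supported on the zero section and fibre components, so that only the rank-two lattice spanned by $F$ and $D$ (with $F^2=0$, $F.D=d$, $D^2=-2c_0$) plus a type-$A$ root lattice $R$ must be preserved. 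Deligne's deformation theory for ordinary K3 surfaces then deforms $Y_p$ with this sublattice (Lemma~\ref{lem:aux}); when $p\equiv 3\bmod 4$ the surface $Y_p$ is supersingular, so one first splits an $\IK_8$ into two $\IK_4$'s to reach an ordinary member, and the available root lattices drop from $A_7\oplus A_7$ to $A_7\oplus A_3\oplus A_3$ -- this, and not a moduli-codimension count, is what forces $r\leq 13$ there. Picard--Lefschetz reflections and Claim~\ref{claim8} then make $H_0=NF+D_0$ quasi-ample, and $H_0^2=2Nd+2d-2c_0$ with $N>2d$ yields the bound $h\geq 2d^2+d+1$, a computation your sketch does not carry out. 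To salvage your route you would need, at minimum, a characteristic-$p$ witness for the stratum with the height-$2c_0$ section and a non-degeneracy argument for its general member -- which is precisely the obstacle the four-squares construction was designed to bypass.
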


\begin{proof}
We combine the approaches from \cite[Thm.\ 10.1]{RS-24} and from Theorem \ref{thm:c_0}
starting from a single terminal object,
namely the  elliptic K3 surface $Y$
with singular fibres of type $\IK_8$ twice and $\IK_2$ four times.
This has 
$$\MW(Y)_{\text{tor}}\cong \ZZ/4\ZZ \times\ZZ/2\ZZ$$
and is given explicitly by a cyclic degree 4 base change from the Legendre curve:
\[
Y: \;\;\; y^2 = x (x-1) (x-t^4).
\]
One directly verifies that the fibration never degenerates modulo $p>2$.
As in \eqref{eq:Legendre}, the reduction $Y_p=Y\otimes\bar\FF_p$ satisfies
\[
\rho(Y_p) = 
\begin{cases}
20
& \text{ if } p\equiv 1\mod 4,\\
22
& \text{ if } p\equiv 3\mod 4.
\end{cases}
\]
We will need the following auxiliary result resembling \cite[Lem.\ 10.1]{RS-24}:

\begin{lemm}
\label{lem:aux}
Fix $p,r,d$ as in Theorem \ref{thm:r<=14}.
For any $c_0\in\{0,\cdots,d-1\}$, there is a root lattice $R$, comprising Dynkin diagrams solely of type $A$, 
and a family of K3 surfaces in characteristic $p$, generically ordinary of rank $r+2$,
with 
\begin{eqnarray}
\label{eq:embedding}
\begin{pmatrix}
0 & d\\
d & -2c_0
\end{pmatrix} \oplus R
\hookrightarrow \NS.
\end{eqnarray}
\end{lemm}

\begin{proof}[Proof of Lemma \ref{lem:aux}]


Write $r=r_1+r_2+r_3$ with $0\leq r_i\leq 7$ such that $p\nmid (r_i+1)$ for each $i$.
In case all $r_i>0$, we also have to assume that $r_2+r_3<7$
(so that $A_{r_2}\oplus A_{r_3} \hookrightarrow A_7$).
Let $c_0=a_1^2+\hdots+a_4^2$ for $a_i\in\NN_0$.
We would like to deform $Y_p$ preserving the genus one fibration together
with (multiplicative) fibres of type $A_{r_1},\hdots,A_{r_3}$ (deforming the two $A_7$ fibres and giving $R$)
and with the divisor $D = d(F+O)+a_1\Theta_1+\hdots a_4\Theta_4$,
where the $\Theta_i$ denote the non-identity components of the four $\IK_2$ fibres of $Y$.

If $p\equiv 1\mod 4$, this works readily by \cite{Deligne} because $Y_p$ is ordinary.

If $p\equiv 3\mod 4$, we  first deform $Y_p$ a little
by imposing that there are two fibres of type $\IK_4$ instead of one of the $\IK_8$ fibres.
Using the arguments from \S \ref{ss:warm-up} and \S \ref{ss:strategy},
we infer that this gives a one-dimensional family of K3 surfaces,
generally ordinary, such that 
\[
U \oplus A_7\oplus A_3^2 \oplus A_1^4 \hookrightarrow \NS.
\]
Starting from a general ordinary member, we can continue as before (with $r_2, r_3\leq 3$) 
at the expense of excluding the maximal value $r=14$.
\end{proof}

\begin{rem}
If $c_0$ is a sum of less than 4 squares, then we can preserve some of the $\IK_2$ fibres
in the above deformation argument
and increase the range of possible $r$ accordingly.
\end{rem}

To complete the proof of Theorem \ref{thm:r<=14}, 
we consider a general member $Y_0$ of the family of K3 surfaces from Lemma \ref{lem:aux};
while the embedding \eqref{eq:embedding} need not be primitive,
we can assure this for the isotropic vector $F$ which deforms from $Y_p$ (where it was obviously primitive).
It follows from Riemann--Roch, that $F$ is either effective or anti-effective,
so let us assume the former after switching the sign, if necessary.
Then $|F|$ may still involve some base locus, consisting of $(-2)$-curves,
but this is eliminated by a composition $\sigma$ of
Picard-Lefschetz reflections.
In consequence, $F_0 = \sigma(F)$ is the fibre of an elliptic fibration on $Y_0$ given by $|F_0|$.

It follows that $\sigma(R)$ is supported on effective or anti-effective $(-2)$-divisors perpendicular to $F_0$,
i.e.\ $\sigma(R)$ is supported on fibre components.
Thus, by the general choice of $Y_0$ deforming $Y_p$, we may assume that 
all reducible fibres continue to be multiplicative, 
of Kodaira types $\tilde R_v$ given by the orthogonal summands $R_v$ of $R$ (or of $\sigma(R)$),
and all other singular fibres have type $\IK_1$.

It remains to investigate the divisor $D_0 = \sigma(D+F)$ of square $D_0^2=2(d-c_0)>0$.
Again by Riemann--Roch,  $D_0$ is either effective or anti-effective,
but since $D_0.F_0=d>0$ the first alternative persists.
The support of $D_0$ thus comprises multisections and fibre components.
Here we may assume that $\Theta.D_0\geq 0$ for any  fibre component $\Theta$.
Indeed, if $\Theta.D_0<0$, then
the reflection $\sigma_{\Theta}$ in $\Theta$ results in $0\leq \sigma_\Theta(D_0)<D_0$,
and so we can proceed successively for all such fibre components,
modifying $\sigma(R)$ as well, but not affecting $F_0$.

\begin{claim}
\label{claim8}
For any $N\geq2d$, the divisor $H_0 = NF + D_0$ is quasi-ample and non-hyperelliptic.
\end{claim}

\begin{proof}[Proof of Claim \ref{claim8}]
We use Criterion \ref{crit} to first check that $H$ is  nef and then that it is also quasi-ample and non-hyperelliptic.
To this extent, we estimate $H_0.C$ for any irreducible curve $C\subset Y_0$:
\begin{itemize}
\item
$H_0.C=d\geq 3$ if $C\sim F_0$;
\item
$H_0.C\geq 0$ for any smooth rational fibre component by choice of $\sigma$;
\item
$H_0.C\geq N(F_0.C) \geq 2N$
for any multisection $C$ which is not smooth rational
(so the multisection  index satisfies $F_0.C>1$);
\item
if $C$ is a smooth rational multisection, then $C$ has multiplicity at most $d$ in $D_0$
(since $D_0.F_0=d$ and $F_0$ is nef),
and $H_0.C\geq (NF_0+dC).C = N-2d$.
\end{itemize}
It remains to compute $H_0^2 = 2Nd+2d-2c_0\geq 4d^2+2\geq 38$
to deduce the claim of Criterion \ref{crit}.
\end{proof}

The proof of Theorem \ref{thm:r<=14} concludes by letting $N>2d$,
so that, by the proof of Claim \ref{claim8},
the only degree 0 curves are fibre components.
More precisely, since $H_0.\Theta\geq 0$ for any fibre component,
but $H_0\perp\sigma(R)$, we infer that $H_0$ meets exactly one component of each fibre with multiplicity $d$
(the component not contained in the support of $\sigma(R)$ -- for rank reasons, there cannot be more than one component
outside the support).
That is, there are exactly $r= $ rank $R$ components of degree zero (so $r_0=r$)
and at least $24-r$ rational components of degree $d$ 
-- the components off the support of $\sigma(R)$ and the fibres of type $\IK_1$ (so $r_d\geq 24-r$ as stated).

Varying $N>2d$, this covers $H_0^2=2h$ for all $h\geq 2d^2+d+1$.
\end{proof}

\subsection*{Acknowledgement}

This work was started at the conference 'Aspects r\'eels de la g\'eom\'etrie' at CIRM
on the occasion of Alex Degtyarev's 60th birthday.
We thank the organizers for the kind invitation,
CIRM for the great hospitality, and the birthday boy for many inspiring discussions throughout the years.

\end{document}